\documentclass[11pt,letterpaper]{amsart}
\usepackage{
  amssymb, xcolor, mathtools, mleftright, xurl}
\usepackage[shortlabels]{enumitem}
\usepackage{hyperref}

\usepackage[margin=1in]{geometry}

\newtheorem{theorem}{Theorem}[section]
\newtheorem{lemma}[theorem]{Lemma}

\newtheorem{prop}[theorem]{Proposition}

\theoremstyle{definition}

\newtheorem{defn}[theorem]{Definition}

\theoremstyle{remark}

\DeclareMathOperator{\ct}{ct}
\DeclareMathOperator{\ctidl}{\mathfrak{ct}}

\DeclareMathOperator{\disc}{disc}
\DeclareMathOperator{\Ht}{Ht}
\DeclareMathOperator{\Htp}{Htp}
\DeclareMathOperator{\ind}{ind}
\DeclareMathOperator{\sgn}{sgn}
\newcommand{\monic}{\mathrm{monic}}
\newcommand{\prim}{\mathrm{prim}}
\DeclareMathOperator{\Res}{Res}

\DeclareMathOperator{\Disc}{Disc}

\DeclareMathOperator{\Cl}{Cl}

\newcommand{\FF}{\mathbb{F}}
\newcommand{\QQ}{\mathbb{Q}}
\newcommand{\PP}{\mathbb{P}}
\newcommand{\RR}{\mathbb{R}}
\newcommand{\ZZ}{\mathbb{Z}}

\renewcommand{\aa}{\mathfrak{a}}
\newcommand{\bb}{\mathfrak{b}}

\newcommand{\D}{\mathcal{D}}
\newcommand{\E}{\mathcal{E}}

\newcommand{\OO}{\mathcal{O}}

\newcommand{\cross}{\times}

\newcommand{\textand}{\quad \text{and} \quad}
\renewcommand{\to}{\mathop{\rightarrow}\limits}
\newcommand{\size}[1]{\lvert #1 \rvert}

\newcommand{\floor}[1]{\left\lfloor #1 \right\rfloor}
\newcommand{\ceil}[1]{\left\lceil #1 \right\rceil}
\newcommand{\1}{\mathbf{1}}
\newcommand{\0}{\mathbf{0}}
\newcommand{\intsec}{\cap}

\newcommand{\<}{\left\langle}
\renewcommand{\>}{\right\rangle}

\renewcommand{\epsilon}{\varepsilon}

\mleftright

\title[Galois groups of twisted reciprocal polynomials]{Galois groups of twisted reciprocal polynomials:\\a uniform asymptotic}
\author{Evan M. O'Dorney}

\begin{document}

\begin{abstract}
We study the Galois group $G_f$ of a random polynomial $f$ in the family of polynomials of degree $2n$ satisfying the twisted reciprocal relation $f(x) = x^{2n}/b^n \cdot f(b/x)$.  We use a Euclidean height adapted to this relation.  Our main result is an asymptotic theorem of van der Waerden--Bhargava type: for fixed $b \neq 0$ and $n \geq 4$, the number of polynomials of height at most $H$ whose Galois group is not the full hyperoctahedral group $S_2 \wr S_n$ is an explicit constant times $H^n\log H$, with an error of order $H^n$.  We determine the dependence of the leading constant on $b$; the leading-order group $G_1$ is of index $2$. This paper is a sequel to a recent paper by Anderson, Bertelli, and the author addressing reciprocal polynomials (i.e.\ the case $b = 1$).
\end{abstract}

\subjclass{11R32, 11R45, 11C08, 11N35, 20E22}

\maketitle

\section{Introduction}

A polynomial $f(x) = \sum_{i = 0}^n a_i x^i$ of fixed degree $n$ with random integer coefficients $a_i$ in an expanding range $[-H, H]$ typically has Galois group $G_f$ the full symmetric group $S_n$, by Hilbert's irreducibility theorem. In 1936, van der Waerden \cite{vdW1936} studied the incidence of other Galois groups and conjectured that the probability that $G_f \neq S_n$ is bounded above and below by constant multiples of $1/H$, the leading-order contribution coming from the Galois group $G_f = S_{n-1}$ of polynomials with a rational root. Van der Waerden's conjecture was recently resolved by Bhargava \cite{Bhargava_vdW_short,Bhargava_vdW}, as the capstone of eight decades of steady improvements \cite{Knobloch1955,Gallagher1972,CD2020,6author}. Bhargava's approach harnesses sophisticated known results (classification of subgroups of $S_n$, distribution of discriminants of number fields) in combination with innovative recent methods, including Fourier equidistribution and the use of the double discriminant $\disc_{a_0} \disc_x f$.

In a recent paper \cite{ABO_RecipPolys}, Anderson, Bertelli, and the author took on the analogous question for \emph{reciprocal polynomials,} that is, those $f$ of degree $2n$ satisfying the palindromic relation $a_{n+i} = a_{n-i}$. This causes the roots of $f$ to lie in reciprocal pairs $\{\alpha, 1/\alpha\}$, so the typical Galois group is now the hyperoctahedral group $G_0 = S_2\wr S_n$, the subgroup of $S_{2n}$ preserving the partition into $n$ pairs. We found that the techniques developed by Bhargava and his predecessors could also be adapted to resolve this case, answering a question of Davis--Duke--Sun \cite{DDS1998}, with two unexpected differences:
\begin{enumerate}[(i)]
  \item\label{intro:G1} The leading-order contribution is \emph{not} from a non-transitive subgroup such as $S_2 \wr (S_{n-1} \cross S_1)$, but rather from the index-$2$ subgroup $G_1 = (S_2 \wr S_n) \intsec A_{2n}$;
  \item\label{intro:log} The probability of a Galois group $G_f \subsetneq G_0$ is not $\Theta(H^{-1})$ but $\Theta(H^{-1} \log H)$, the log factor arising naturally from an appearance of the harmonic series while counting $G_1$-polynomials.
  \item\label{intro:better} All other Galois groups provably contribute a strictly lower-order count of polynomials. This situation contrasts favorably with the state of the art for general (nonreciprocal) degree-$n$ polynomials, where Bhargava's unconditional bounds still allow transitive groups (such as $A_n$) to appear with frequency $O(H^{-1})$, potentially tying the dominant reducible contribution, although the expected frequency is only $H^{-n/2+o(1)}$; see \cite{MR4768827,AndersonEtAlStrongVdW}.
\end{enumerate}
In this sequel to \cite{ABO_RecipPolys}, we sharpen the surprising findings of \ \ref{intro:G1} and \ref{intro:log} above to an asymptotic and also extend them to the \emph{twisted reciprocal} case, that is, polynomials whose roots lie in pairs $\{\alpha, b/\alpha\}$ for fixed $b$. We expect that Bhargava's techniques will apply to many other linear families of polynomials.  We state our results precisely below.

\subsection{\texorpdfstring{$b$}{b}-Reciprocal polynomials}
Let $n \geq 1$ and $b \neq 0$ be integers. We say that a polynomial $f(x) = \sum_{i=0}^{2n} a_i x^i \in \RR[x]$ of degree $2n$ is \emph{$b$-reciprocal} if it satisfies the linear relations
\begin{equation}
  a_{n-i} = b^i a_{n+i}, \quad i = 0, 1,\ldots, n,
\end{equation}
or, in other words, the identity
\[
  f(x) = \frac{x^{2n}}{b^n} f\left(\frac{b}{x}\right).
\]
If $f \in \ZZ[x]$ and $\disc f \neq 0$, then we can consider the Galois group $G_f \subseteq S_{2n}$ of the roots of $f$. Note that if $\alpha \neq 0$ is a root of $f$, so is $b/\alpha$, implying that $G_f$ is a subgroup of the hyperoctahedral group $G_0 = S_2 \wr S_n$. We are interested in the question of how often $G_f \subsetneq G_0$.

These polynomials are important to study due to their appearance as characteristic polynomials. As already mentioned, any matrix from the symplectic group $\mathrm{Sp}({2n}, \Bbbk)$ has reciprocal characteristic polynomial. Similarly, any matrix from the symplectic similitude group $\mathrm{GSp}(2n, \Bbbk)$ has $b$-reciprocal characteristic polynomial, where $b$ is the factor by which it scales the fixed symplectic form. It is desirable to know how often a $b$-reciprocal polynomial appears as a characteristic polynomial in this group. Over finite fields, this was done by Chavdarov \cite{Chavdarov}; this paper sheds possible light on the corresponding question for symplectic similitudes over $\ZZ$. 

While it is possible to derive results ordering such polynomials by the na\"ive height $\max |a_i|$, it is more pleasant to weight the coefficients in a manner that respects the $b$-symmetry:
\begin{equation}\label{eq:b_height}
  \Ht_b(f)=\sqrt{\sum_{i=0}^{2n}|b|^i a_i^2},
\end{equation}
equivalently
\begin{equation}\label{eq:b_height_free}
  \Ht_b(f)^2=|b|^n a_n^2+2\sum_{i=0}^{n-1} |b|^i a_i^2.
\end{equation}
Note that the real polynomials of height at most $H$ form an ellipsoid in the $n+1$ free coefficients, and if $f$ is an integer polynomial with $a_{2n} \neq 0$ we have $\Ht_b(f) \geq \sqrt{2}|b|^n$.

When $b = 1$, these $f$ are \emph{reciprocal polynomials}, which naturally arise as characteristic polynomials of symplectic matrices, and their Galois groups were studied in \cite{ABO_RecipPolys}.  Here we ask the corresponding question for the whole $b$-reciprocal family.  Of particular note is the case when $b=-1$: these $f$ are \emph{skew-reciprocal polynomials}.  Skew-reciprocal polynomials arise as characteristic polynomials of antisymplectic matrices, and have numerous applications \cite{Hua}.
\begin{theorem} \label{thm:main}
  Let $n \geq 4$ and $b \neq 0$ be fixed integers. Let $\E_{n,b}(H)$ be the number of separable $b$-reciprocal polynomials $f$ of degree $2n$ with height $\Ht_b(f)\leq H$ whose Galois group is not $S_2 \wr S_n$, and let $\E_{n,b}^\monic$ count the subset of these that are monic. Then
  \begin{align}
    \E_{n,b}(H)
    &=\kappa_{n,\sgn b}\,\rho(|b|)\,|b|^{-n(3n+1)/4}H^n\log H
    +O_{n,b}(H^n) \label{eq:main} \\
    \E_{n,b}^\monic(H)
    &=\kappa_{n-1,\sgn b}\,\rho(|b|)\, |b|^{-3n(n-1)/4}H^{n-1}\log H
    +O_{n,b}(H^{n-1}), \label{eq:monic}
  \end{align}
  where the archimedean constant
  \begin{equation}\label{eq:kappa_nonmonic}
    \kappa_{n,\iota}
    =\frac{V_n}{2^{n/2}\zeta(2)\sqrt{2\ceil{n/2}}}
    K\left(\frac{1-\iota}{2}
    -\frac{2\floor{n/2}+1}{2\ceil{n/2}}\right) > 0
  \end{equation}
  is given in terms of the volume $V_n=\pi^{n/2}/\Gamma(n/2+1)$ of the unit $n$-ball and Legendre's complete elliptic integral
  \[
    K(m)=\int_0^{\pi/2}\frac{d\phi}{\sqrt{1-m\sin^2\phi}}
    \qquad(m<1),
  \]
  while the nonarchimedean constant is
  \begin{equation}\label{eq:rho_b}
    \rho(|b|)=\prod_{p^e\parallel \size{b}}
    \left(1+e\frac{p-1}{p+1}\right)
  \end{equation}
\end{theorem}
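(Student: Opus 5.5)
Write $f(x) = x^n g(x + b/x)$, where $g$ is a polynomial of degree $n$ (the "trace polynomial"). The $b$-reciprocal relation is exactly the statement that $f$ factors through $y = x + b/x$. The roots of $f$ are then the roots of the $n$ quadratics $x^2 - \beta_j x + b$, with $\beta_j$ running over the roots of $g$. Hence $G_f \subseteq S_2 \wr G_g$, and the kernel of the projection to $G_g$ is governed by the square classes of the discriminants $\beta_j^2 - 4b$ in the splitting field of $g$.

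The classical criterion (as in \cite{ABO_RecipPolys}) says $G_f = S_2 \wr S_n$ if and only if two things hold: $G_g = S_n$, and none of the natural quadratic subextensions collapses. Concretely, when $G_g = S_n$, the only proper subgroups of $S_2 \wr S_n$ surjecting onto $S_n$ are of index $2$ (or $S_n$-invariant submodules of $\FF_2^n$, hence containing the "sum" or "diagonal" subspaces). So the failure cases are:
\begin{enumerate}[(a)]
\item $G_g \neq S_n$;
\item $\prod_j(\beta_j^2 - 4b) = \pm g(2\sqrt b)\,g(-2\sqrt b)$ is a rational square times $\disc g$, or is itself a square (yielding $G_1 = G_0 \cap A_{2n}$ and the other index-two groups);
\item the smaller-kernel cases, which force many square conditions.
\end{enumerate}
I would first verify that $\disc f$ equals, up to a power of $b$ and a constant, $(\disc g)^2 \cdot \Res(g, y^2 - 4b)$. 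This makes $G_f \subseteq A_{2n}$ equivalent to $\Res(g, y^2 - 4b) = g(2\sqrt b)\,g(-2\sqrt b)$ (times a known sign and power of $b$) being a square.

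The error-term step bounds cases (a) and (c) by $O(H^n)$, which is $O(H^{n-1})$ in the monic case. For (a), the coefficients of $g$ are linear in those of $f$ with bounded distortion, so Bhargava's bound $O(H^{n-1+\epsilon})$ for non-$S_n$ degree-$n$ polynomials (or the sharper version in \cite{ABO_RecipPolys} for $n \geq 4$) applies after the box-to-ellipsoid change of shape. For (c) and the index-$2$ groups other than $G_1$, I would show they force $g(2\sqrt b)$ or $g(-2\sqrt b)$ to be a square up to bounded factors in $\QQ(\sqrt b)$, or force a codimension-$\ge 2$ condition. These are then counted by the geometry-of-numbers or square-sieve arguments of \cite{ABO_RecipPolys}, giving $O(H^n)$ without a log.

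The main term counts $f$ of height at most $H$ with $N(f) := g(2\sqrt b)\,g(-2\sqrt b) \cdot (\text{sign, power of } b)$ equal to a square. Write $g(2\sqrt b) = u + v\sqrt b$ with $u, v$ linear forms in the coefficients. The condition becomes that $u^2 - bv^2$ is (up to the explicit factor) a square $w^2$, i.e.\ the integral point $(u, v, w)$ lies on a conic.

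I would parametrize the solutions: $u + v\sqrt b$ is a norm from $\QQ(\sqrt b)$ times a square, so $(u, v)$ lies in a lattice of index depending on a parameter $d$. I then sum over that parameter the number of $f$ in the ellipsoid whose $(u, v)$ lands in the rank-$2$ set of solutions. The contribution of each $d$ is proportional to $H^n/d$ up to size $\asymp H$, and the harmonic sum gives the $\log H$. For each $d$, I count lattice points of the remaining $n-1$ free coordinates in the slice of the ellipsoid, by the Lipschitz principle; this produces $V_n$, $2^{-n/2}$, the powers of $|b|$ from the weights in $\Ht_b$, and $1/\zeta(2)$ from primitivity of $(u, v)$.

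The archimedean density is the integral over the ellipsoid slice of the density of the conic. I expect this to reduce to $\int d\theta/\sqrt{Q(\theta)}$, where $Q$ is the restriction of the height form to the $(u, v)$-plane (the Gram entries of $g(\pm 2\sqrt b)$ against the weights $|b|^i$ give the $\lfloor n/2\rfloor$, $\lceil n/2\rceil$ quantities), yielding the complete elliptic integral $K$. The sign of $b$ changes $u^2 - bv^2$ from definite to indefinite, which is the origin of the parameter shift $(1 - \iota)/2$. The local factor $\rho(|b|)$ comes from $p \mid b$: there $y^2 - 4b$ is ramified and the square condition on $\Res$ has local density $1 + e(p-1)/(p+1)$ relative to generic primes, which I would compute by counting solutions mod $p^{e+k}$.

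The monic case is identical with one fewer free coordinate, replacing $n$ by $n-1$ in $\kappa$. The main obstacle is controlling uniformity of the lattice-point count over $d$ up to $H$, near the boundary where the slice is thin; I would handle this with Davenport's lemma plus a tail estimate for $d > H^{1-\delta}$, giving $O(H^n)$.
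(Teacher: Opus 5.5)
Your reduction to $g$, the $G_1$ criterion and the conic $u^2-bv^2=w^2$ match the paper. The genuine gap is your error-term step for $G_2$, which is the hardest case. You propose to show that the index-$2$ groups other than $G_1$ force $g(\pm2\sqrt b)$ to be a square up to bounded factors in $\QQ(\sqrt b)$, or impose a codimension-$\geq2$ condition. Neither holds for $G_2$. Since $\disc f=b^{n(n-1)}g(2\sqrt b)g(-2\sqrt b)(\disc g)^2$, the condition $G_f\subseteq G_2$ (given $G_g=S_n$) says exactly that $g(2\sqrt b)g(-2\sqrt b)\disc g$ is a rational square. That is one square condition on a degree-$2n$ form in the $n+1$ coefficients of $g$: not codimension $\geq2$, and unrelated to squares in $\QQ(\sqrt b)$. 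Large-sieve or square-sieve bounds for such a condition save less than a full power of $H$, so they cannot reach $O(H^n)$.

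What is needed is Bhargava's discriminant method as adapted in \cite{ABO_RecipPolys}. Split according to the size of $\Disc K_g$ and its radical. In the large-discriminant, small-radical case, use that a prime $p\nmid 2b$ with $v_p(\Disc K_g)$ odd must have $v_p\bigl(g(2\sqrt b)g(-2\sqrt b)\bigr)$ odd; this forces $b$ to be a square mod $p$ and $g$ to vanish at $\pm2r_p$ mod $p$ (Lemma~\ref{lem:case1}). In the remaining case use the double discriminant $\Res_{c_0}(h,\partial h/\partial c_0)$ of $h=g(2\sqrt b)g(-2\sqrt b)\disc g$.

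Likewise $G_3$ is not a $\QQ(\sqrt b)$-square condition. It is a factorization $f=c\,h(x)\,x^nh(b/x)$ over a varying quadratic field $\QQ(\sqrt k)$, bounded by geometry of numbers in $\QQ(\sqrt k)$ and summed over $k$. A minor point: Bhargava's bound for $G_g\neq S_n$ is $O(H^n)$, not $O(H^{n-1+\epsilon})$, since reducible $g$ alone number $\asymp H^n$; $O(H^n)$ still suffices.

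The main-term sketch also needs repair.
\begin{itemize}
\item As written it is inconsistent: if $(u,v)$ ranged over a full-rank lattice of index $\asymp d$, each $d$ would contribute $\asymp H^{n+1}/d$, not $H^n/d$.
\item The correct structure: the integral solutions of $X^2-4bY^2=Z^2$ (your $u=X$, $v=2Y$) are $(X,Y,Z)=\frac{\lambda}{\gcd(s,b)}(s^2+bt^2,\,st,\,s^2-bt^2)$ with $(s,t)$ primitive, and each $(X,Y)$ with $Z\neq0$ arises four times. So for fixed $\lambda$ you count primitive $(s,t)$ with $\lVert(s,t)\rVert^2\ll H/\lambda$. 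The factor $1/\zeta(2)$ comes from primitivity of $(s,t)$, not of $(u,v)$.
\item Summing over $\lambda$ last can work, but your plan to discard $d>H^{1-\delta}$ by a tail estimate of size $O(H^n)$ fails. Every dyadic range of $d$ contributes $\asymp H^n$, so that tail is $\asymp\delta H^n\log H$. You need the primitive-point count with error $O\bigl(\sqrt{H/\lambda}\log(2H/\lambda)+1\bigr)$ uniformly in $\lambda\ll H$; these errors sum to $O(H)$.
\item The paper avoids this by summing over $\lambda$ first, giving a Riemann sum $\approx H\gcd(s,b)F_{n,b}(s,t)$ with $F_{n,b}$ homogeneous of degree $-2$. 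It then sums over dyadic annuli in $(s,t)$ up to $\sqrt H$.
\item In either order, $\rho(|b|)$ is the mean of the content $\gcd(s,b)$ over primitive pairs, which your sketch never identifies. Your description of $\rho$ as a $p$-adic density ratio at $p\mid b$ is numerically right. However, feeding local densities into the constant of an $H\log H$ count on a ternary cone would need a separate theorem (e.g.\ Heath-Brown's delta method), not the parametrization you propose.
\item $K(\cdot)$ and $\kappa_{n,\iota}$ are asserted rather than derived. The paper obtains them by computing $F_{n,\iota}(s,t)$ exactly as the volume of the section $U\widetilde Y-V\widetilde X=0$ of the height ellipsoid.
\end{itemize}
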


In fact, more can be said: $100\%$ of the contribution to $\E_{n,b}(H)$ as $H \to \infty$ comes from a single Galois group, denoted $G_1$ below; all other Galois groups appear at most $O_{n,b}(H^n)$ times (and likewise in the monic case). This can be seen from our bounds and the group theory of \cite[Remark 3.6]{ABO_RecipPolys}, which shows that all other subgroups of $G_0 = S_2 \wr S_n$ are contained in $G_2$, $G_3$, or $S_2 \wr H$ for some $H \subsetneq S_n$.

Our result reveals a possible connection between the sets
\[
\{f:\deg{f} = 2n, f(x) = g(x)h(x), \deg{g} = \deg{h} = n\}
\]
and 
\[
\{f: \deg{f} = 2n, f \text{ is $b$-reciprocal, } G_f \neq S_2\wr S_n\};
\]
indeed van der Waerden showed \cite{vdW1936} that for coefficients in $[-H,H]$, polynomials in the first set occur with frequency $H^{-n}\log{H}$, which matches our count for the second set.

For comparison, the total number of $b$-reciprocal polynomials in Theorem \ref{thm:main} is
\[
  \frac{V_{n+1}}{2^{n/2}|b|^{3n(n+1)/4}}H^{n+1}+O_{n,b}(H^n),
\]
where $V_m$ is the volume of the Euclidean unit ball in $\RR^m$.  Thus the exceptional probability is $O_{n,b}(\log H/H)$, as in \cite{ABO_RecipPolys}.

\subsection{Notation}
Throughout this paper, $n$ and $b$ are fixed, while $H \to \infty$. We carry over the same notational conventions as in \cite{ABO_RecipPolys}, as far as practical.

If $g(x)=a_0+a_1x+\cdots+a_mx^m$ is a polynomial over a number field $K$, we define its \emph{content ideal}
\[
  \ctidl(g) = (a_0, a_1,\ldots, a_m)
\]
to be the fractional ideal generated by the coefficients. In the case $K = \QQ$, $\ctidl(g) = \bigl(\ct(g)\bigr)$ is principal, with a unique positive generator which we call the \emph{content} of $g$.

If $g(x)=a_0+a_1x+\cdots+a_mx^m$ is an integer polynomial, define its \emph{height} by the Euclidean norm of its coefficient vector,
\[
  \Ht(g)=\left(\sum_{i=0}^m a_i^2\right)^{1/2}.
\]
If $g\neq0$, define its \emph{projective height} by
\[
  \Htp(g)=\frac{\Ht(g)}{\ct(g)}.
\]
Thus $\Htp(g)=\Ht(g)$ when $g$ is primitive. For polynomials of degree at most $n$, these Euclidean heights differ by at most a constant factor depending only on $n$ from the $\infty$-norm heights used in \cite{ABO_RecipPolys}. Consequently, all estimates from that paper stated in terms of $\Ht$ or $\Htp$ may be used here without change, although the implied constants may need to be modified by factors depending only on $n$.

\subsection{Acknowledgments}
I thank Theresa ``Tess'' Anderson for useful discussions, original ideas, and writing that went into an earlier draft of this article,  \href{https://arxiv.org/abs/2603.15875}{arxiv:2603.15875} (\textsection2). As this version is heavily AI-generated and the results are noticeably stronger, I have decided to post it as a new arXiv article.

\section{\texorpdfstring{$b$}{b}-Reciprocal polynomials}
If $f$ is a $b$-reciprocal polynomial of degree $2n$, it is easy to see that there is a unique degree $n$ integer-coefficient polynomial
\[
  g(u) = c_n u^n + \cdots + c_1 u + c_0
\]
such that
\[
  f(x) = x^n g\left(x + \frac{b}{x}\right).
\]
The passage between $f$ and $g$ is bijective and linear.  For fixed $n$ and $b$, the height $\Ht_b(f)$ is equivalent as a norm to any fixed norm on the coefficients of $g$.  Thus the upper bounds from \cite{ABO_RecipPolys} and \cite{Bhargava_vdW} continue to apply, with implied constants depending on $n$ and $b$.  For the leading constant, however, we retain the precise ellipsoid defined by the height \eqref{eq:b_height}.

We denote the roots of $f$ by
\[
  \alpha_1, \frac{b}{\alpha_1}, \alpha_2, \frac{b}{\alpha_2}, \ldots, \alpha_n, \frac{b}{\alpha_n}.
\]
\[
  \beta_i = \alpha_i + \frac{b}{\alpha_i}.
\]
We will sometimes write $\alpha = \alpha_1$ and $\beta = \beta_1$ when the choice of root is irrelevant.

\begin{lemma} \label{lem:disc_f}
  $\disc f = b^{n(n-1)} g\bigl(2\sqrt{b}\bigr) g\bigl(-2\sqrt{b}\bigr) (\disc g)^2$.
\end{lemma}
\begin{proof}
  Express both sides in terms of the $\alpha_i$ as in \cite[Lemma 2.1]{ABO_RecipPolys}.
\end{proof} 

Assuming $n \geq 4$, we may discard any set containing $O_{n,b}(H^n)$ polynomials in the nonmonic family, or $O_{n,b}(H^{n-1})$ in the monic family, without affecting the leading term.  In particular, by the results of \cite{Bhargava_vdW} and the adaptations in \cite{ABO_RecipPolys}, we may assume that $g$ is irreducible, that $G_g=S_n$, and that $g(\pm 2\sqrt b)$ are nonzero, in particular that $f$ is separable.

We have the tower of number fields
  \[
    K_f = \QQ(\alpha) \quad \supseteq \quad K_g = \QQ(\beta) \quad \supset \quad \QQ,
  \]
  where $K_g$ is an extension of degree $n$, while $K_f/K_g$ is of degree at most $2$, given by $K_f = K_g(\sqrt{\beta^2 - 4 b})$. Let $\widetilde{K}_g$ and $\widetilde{K}_f$, respectively, be the splitting fields of $g$ and $f$, and let $G_g$ and $G_f$ be their respective Galois groups, which are subgroups of $S_n$ and $G_0$, with $G_f \twoheadrightarrow G_g$ under the natural projection $G_0 \twoheadrightarrow S_n$. By the main result of Bhargava \cite[Theorem 1]{Bhargava_vdW}, we can assume that $G_g$ is the whole $S_n$. Our aim in this paper is to understand when $G_f$ is not the whole $G_0$.

  Group-theoretically, this problem behaves just as in \cite{ABO_RecipPolys}:
  
  \begin{prop}[\cite{ABO_RecipPolys}, Theorem 3.4] \label{prop:G_i}
    The maximal subgroups of $G_0$ whose projection onto $S_n$ is the whole group are
    \begin{alignat*}{2}
      G_1 &= \left\{(\mathbf{v}, \sigma) \in \FF_2^n \rtimes S_n : \sum_i v_i = 0\right\} = \<\1\>^\perp \rtimes S_n, &\quad n &\geq 1 \\
      G_2 &= \left\{(\mathbf{v}, \sigma) \in \FF_2^n \rtimes S_n : \sum_i v_i = \sgn \sigma\right\}, &\quad n &\geq 2 \\
      G_3 &= \<\1\> \cross S_n, &\quad n &\geq 3 \text{ odd.}
    \end{alignat*}
  \end{prop}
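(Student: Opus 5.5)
The statement is purely group-theoretic, so I will prove it directly inside $G_0=\FF_2^n\rtimes S_n$. Let $M\subsetneq G_0$ be maximal with $\pi(M)=S_n$, where $\pi\colon G_0\to S_n$ is the projection. Set $V=\FF_2^n$ and $W=M\cap V$. Since $\pi(M)=S_n$ and $V$ is abelian, conjugation by $M$ acts on $W$ through $S_n$. Hence $W$ is an $S_n$-submodule of the permutation module $\FF_2^n$. The submodules of $\FF_2^n$ are classical: $0$, $\<\1\>$, $\<\1\>^\perp$ and $V$, and for $n\ge 3$ there are no others. Small $n$ must be checked by hand; for $n=2$ we have $\<\1\>=\<\1\>^\perp$. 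We cannot have $W=V$, because then $M\supseteq V$ together with $\pi(M)=S_n$ would force $M=G_0$. The plan is to split into cases according to $W$.

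\textbf{Case $W=\<\1\>^\perp$.} Then $M/W$ is a complement, or the full group, in $G_0/W\cong (V/W)\rtimes S_n=\FF_2\times S_n$. Its subgroups surjecting onto $S_n$ and meeting $\FF_2$ trivially are graphs of homomorphisms $S_n\to\FF_2$. There are exactly two such homomorphisms, trivial and sign. Pulling back gives $G_1$ and $G_2$. These are index-$2$ subgroups and hence maximal. For $n=1$ the sign map is trivial, so only $G_1$ occurs, which matches the stated range $n\ge 2$ for $G_2$.

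\textbf{Case $W\in\{0,\<\1\>\}$.} Here $M$ is an extension of $S_n$ by $W$ inside $G_0$, and $M$ gives a section $S_n\to G_0/W$. Such sections up to conjugacy are classified by $H^1(S_n,V/W)$, and I will compute it with Shapiro's lemma. We have $H^1(S_n,\FF_2^n)=H^1(S_{n-1},\FF_2)=\FF_2$, with the nontrivial class realized by the cocycle $\sigma\mapsto \1_{\{i:\ \sigma \text{ moves } i \text{ oddly}\}}$, or more concretely by the conjugate complement. The key point is that any such $M$ is contained in the full preimage of a subgroup of $G_0/\<\1\>$ or $G_0/\<\1\>^\perp$. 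For maximality I will analyze when $M\cdot\<\1\>$ or $M\cdot\<\1\>^\perp$ is proper. If $M\<\1\>^\perp\ne G_0$, then $M$ lies in $G_1$ or $G_2$ by the previous case. Otherwise $M\<\1\>^\perp=G_0$, which means $M$ contains an element $(\mathbf v,\sigma)$ with $\sum v_i$ not determined by $\sigma$.

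\textbf{Obstacle.} The hardest step is showing that this last situation forces $n$ odd and $M\subseteq G_3=\<\1\>\times S_n$, up to conjugacy. For this I will use the $H^1$ computation: a complement to $V$ is conjugate either to the standard $S_n$ or to the twisted one. Taking the parity $\sum v_i$ gives a homomorphism $S_n\to\FF_2$ on the complement, and for the standard complement this is trivial, so the standard $S_n$ sits inside $G_1$. Adding $\1$ changes the parity by $n$. So when $n$ is even, $\<\1\>\times S_n\subseteq G_1$. When $n$ is odd, $\<\1\>\times S_n$ escapes both $G_1$ and $G_2$, it is maximal, and it equals $G_3$. I will then verify that the twisted complement, together with $\1$ if present, always lands in $G_1$ or $G_2$ by computing its parity homomorphism on transpositions. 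Finally I will check that $G_1$, $G_2$ and $G_3$ are pairwise distinct and maximal.
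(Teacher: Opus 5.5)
Since the paper only cites this result, your sketch is judged on its own. The case $W=\<\1\>^\perp$ is correct and complete. So is the reduction showing that $M\<\1\>^\perp\neq G_0$ forces $M\subseteq G_1$ or $M\subseteq G_2$.

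\textbf{The gap is in the last branch.} You correctly say the relevant group is $H^1(S_n,V/W)$, but you then compute only $H^1(S_n,\FF_2^n)$. You also enumerate only complements to $V$, \emph{possibly with $\1$ adjoined}. When $W=\<\1\>$, however, $M$ is merely an extension $1\to \<\1\>\to M\to S_n\to 1$. This extension need not split, so $M$ need not contain any complement to $V$.

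For example, take $n=4$ and let $\mathrm{GL}_2(\FF_3)$ act on the eight nonzero vectors of $\FF_3^2$, paired as $\{v,-v\}$. Then:
\begin{enumerate}[(i)]
\item this action embeds $\mathrm{GL}_2(\FF_3)$ in $S_2\wr S_4$;
\item the image surjects onto $\mathrm{PGL}_2(\FF_3)\cong S_4$;
\item it meets $V$ exactly in the scalars $\{\pm1\}=\<\1\>$;
\item it has no subgroup isomorphic to $S_4$, since its only index-$2$ subgroup is $\mathrm{SL}_2(\FF_3)$, which has a unique involution.
\end{enumerate}
So your plan to show that this branch \emph{forces $n$ odd} does not cover every $M$. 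That plan checks that, for even $n$, the standard and twisted complements, with or without $\1$, lie in $G_1$; the example above is neither.

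Your planned final verification also fails. For $n\geq 3$ the nonzero class is represented by $T=\{(\sgn(\sigma)\1,\sigma)\}$, with $\sgn$ read in $\FF_2$. It is not given by the \emph{moves $i$ oddly} recipe, and a conjugate of the standard complement gives the zero class. For odd $n$ we get $\<\1\>\cross T=\<\1\>\cross S_n=G_3$, which lies in neither $G_1$ nor $G_2$.

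\textbf{The missing idea is elementary.} In the branch $M\<\1\>^\perp=G_0$, pick $(\mathbf v,\sigma),(\mathbf v',\sigma)\in M$ with $\sum_i v_i\neq\sum_i v'_i$. Then $(\mathbf v+\mathbf v',1)\in W$ has odd weight. Since $W\subseteq\<\1\>$, this forces $W=\<\1\>$ with $n$ odd. In particular, non-split extensions for even $n$ never reach this branch.

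Now let $n$ be odd. The element $(\1,1)$ is central and lies outside $G_1$, so $C=M\intsec G_1$ has index $2$ in $M$. Moreover $\pi(C)=S_n$ and $C\intsec V=\<\1\>\intsec\<\1\>^\perp=0$. Thus $C$ is a genuine complement to $V$, and $M=\<\1\>\cross C$. Only at this point does your computation $H^1(S_n,\FF_2^n)\cong\FF_2$ apply: $C$ is $V$-conjugate to $S_n$ or to $T$, and in either case $M$ is conjugate to $G_3$. Equivalently, $H^1(S_n,\<\1\>^\perp)=0$ for odd $n\geq3$. This holds because $V=\<\1\>\oplus\<\1\>^\perp$ and both $H^1(S_n,\<\1\>)$ and $H^1(S_n,V)$ equal $\FF_2$.

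Finally, the maximality of $G_3$ needs one line. A subgroup strictly containing $G_3$ still surjects onto $S_n$, so it meets $V$ in a submodule strictly larger than $\<\1\>$. For odd $n$ that submodule can only be $V$, so the subgroup is all of $G_0$.
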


  To prove Theorem \ref{thm:main}, we must bound the number of polynomials $f$, equivalently $g$, for which $G_f$ is a subgroup of $G_1$, $G_2$, or $G_3$ for the values of $n$ listed in Proposition \ref{prop:G_i}.
  
  \begin{defn} \label{def:E_n}
    For $G \subseteq G_0$ and $H \geq 2$, let $\E_{n,b}(G; H)$ be the number of separable $b$-reciprocal polynomials $f$ of degree $2n$ with $\Ht_b(f)\leq H$ such that $G_g = S_n$ and $G_f \subseteq G$. Let $\E_{n,b}^{\monic}(G; H)$ count the subset of these that are monic.
  \end{defn}

  Applying the method of proof of Lemma 3.8 of \cite{ABO_RecipPolys} to Lemma \ref{lem:disc_f}, we get the following criteria:
  \begin{lemma}\label{lem:G123_conds} Assume that $G_g$ is the whole of $S_n$. Then:
    \begin{enumerate}[$($a$)$]
      \item\label{it:G1} $G_f \subseteq G_1$ if and only if $g\bigl(2\sqrt{b}\bigr) g\bigl(-2\sqrt{b}\bigr)$ is a square.
      \item\label{it:G2} $G_f \subseteq G_2$ if and only if $g\bigl(2\sqrt{b}\bigr) g\bigl(-2\sqrt{b}\bigr) \disc g$ is a square.
    \end{enumerate}
  \end{lemma}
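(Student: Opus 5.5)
The approach is to identify, for each of $G_1$ and $G_2$, a quadratic character of $G_0$ whose kernel is that subgroup, and to compute the square class of an explicit element of $\QQ$ on which $G_f$ acts through this character. Write elements of $G_0$ as $(\mathbf v,\sigma)$ with $\mathbf v\in\FF_2^n$ recording which pairs $\{\alpha_i, b/\alpha_i\}$ are swapped. The relevant characters are
\[
\chi_1(\mathbf v,\sigma)=(-1)^{\sum v_i}, \qquad \chi_2(\mathbf v,\sigma)=(-1)^{\sum v_i}\sgn\sigma,
\]
so that $G_1=\ker\chi_1$ and $G_2=\ker\chi_2$. Since $G_f\subseteq G_0$, we have $G_f\subseteq \ker\chi$ if and only if $\chi$ is trivial on $G_f$. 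By Galois theory this holds exactly when some (equivalently any) nonzero $\delta\in\widetilde K_f$ with $\tau(\delta)=\chi(\tau)\delta$ for all $\tau\in G_0$ actually lies in $\QQ$. In that case $\delta^2\in\QQ$, and the condition becomes: $\delta^2$ is a rational square.

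\textbf{Construction for $\chi_1$.} Take $\delta_1=\prod_{i=1}^n\bigl(\alpha_i-b/\alpha_i\bigr)$.
\begin{itemize}
\item Swapping $\alpha_i\leftrightarrow b/\alpha_i$ negates the $i$th factor.
\item Permuting the pairs permutes the factors.
\end{itemize}
Hence $\tau(\delta_1)=\chi_1(\tau)\delta_1$. Next, $(\alpha_i-b/\alpha_i)^2=\beta_i^2-4b$, so
\[
\delta_1^2=\prod_i(\beta_i-2\sqrt b)(\beta_i+2\sqrt b).
\]
Writing $g(u)=c_n\prod_i(u-\beta_i)$, this equals $c_n^{-2}\,g(2\sqrt b)\,g(-2\sqrt b)$ (with sign $(-1)^n\cdot(-1)^n=1$). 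This is a rational number, since $g(2\sqrt b)g(-2\sqrt b)$ is a norm from $\QQ(\sqrt b)$. It is nonzero by the standing separability assumption. Multiplying by the rational square $c_n^2$ gives (a): $G_f\subseteq G_1$ if and only if $g(2\sqrt b)g(-2\sqrt b)$ is a square in $\QQ$.

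\textbf{Construction for $\chi_2$.} Take $\delta_2=\delta_1\cdot c_n^{n-1}\prod_{i<j}(\beta_i-\beta_j)$.
\begin{itemize}
\item The Vandermonde factor in the $\beta_i$ is unchanged by the swaps $\alpha_i\leftrightarrow b/\alpha_i$, since these fix each $\beta_i$.
\item It transforms by $\sgn\sigma$ under permutations of the pairs.
\end{itemize}
Hence $\tau(\delta_2)=\chi_2(\tau)\delta_2$, and $\delta_2^2=\delta_1^2\disc g$. This gives (b) as before. One could instead derive everything from $\disc f$ via Lemma~\ref{lem:disc_f} as in \cite[Lemma 3.8]{ABO_RecipPolys}, but working directly with $\delta_1$ avoids needing the sign of the square root of $\disc f$.

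\textbf{Main obstacle: the Galois-theoretic equivalence.} We need that $\chi$ trivial on $G_f$ is equivalent to $\delta\in\QQ$. One direction is immediate: if $\chi$ is trivial on $G_f$, then $\delta$ is fixed by $G_f$, so $\delta\in\QQ$. For the converse, suppose $\delta\in\QQ$. Then $\chi(\tau)\delta=\delta$ for all $\tau\in G_f$, and since $\delta\neq 0$ this forces $\chi(\tau)=1$. The only real care needed is that $\delta\neq0$. This holds because the $\beta_i$ are distinct (as $g$ is separable) and $\beta_i\neq\pm2\sqrt b$ (as $g(\pm2\sqrt b)\neq 0$). The hypothesis $G_g=S_n$ is not strictly needed for this argument; it only ensures that we are in the setting of Proposition~\ref{prop:G_i}.
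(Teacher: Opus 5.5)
Your proof is correct and is essentially the paper's argument. The paper applies the method of \cite[Lemma 3.8]{ABO_RecipPolys} to Lemma~\ref{lem:disc_f}: since $G_1 = G_0\cap A_{2n}$, one has $G_f\subseteq G_1$ iff $\disc f$ is a square, and $G_f\subseteq G_2$ iff $\disc f\cdot\disc g$ is a square; the square classes are then read off from the factorization of $\disc f$. Your $\delta_1$ differs from $\sqrt{\disc f}$ only by the nonzero rational factor $\pm c_n b^{n(n-1)/2}\disc g$, so working with it directly lets you bypass Lemma~\ref{lem:disc_f}. The paper's route has no sign of $\sqrt{\disc f}$ to worry about either, so that advantage is not real. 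You are right that the hypothesis $G_g=S_n$ is not actually used.
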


\subsection{Counting \texorpdfstring{$G_1$}{G1}-polynomials}
From $G_1$ we get the main terms of \eqref{eq:main} and \eqref{eq:monic} in Theorem \ref{thm:main}.

\begin{theorem} \label{thm:G1}
  For fixed $b\neq0$ and $n\geq2$, with $\iota=\sgn(b)$, we have
  \begin{align}
    \E_{n,b}(G_1;H)
      &=\kappa_{n,\iota}\rho(|b|)|b|^{-n(3n+1)/4}
        H^n\log H + O_{n,b}(H^n), \label{eq:G1_nonmonic_asymptotic}\\
    \E_{n,b}^{\monic}(G_1;H)
      &=\kappa_{n-1,\iota}\rho(|b|)|b|^{-3n(n-1)/4}
        H^{n-1}\log H + O_{n,b}(H^{n-1}).
        \label{eq:G1_monic_asymptotic}
  \end{align}
\end{theorem}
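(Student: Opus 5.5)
By Lemma \ref{lem:G123_conds}\ref{it:G1}, the task is to count integer $g$ of degree $n$, with $f=x^ng(x+b/x)$ and $\Ht_b(f)\le H$, such that $N(g):=g(2\sqrt b)g(-2\sqrt b)$ is a nonzero square. The conditions $G_g=S_n$ and separability cost only $O_{n,b}(H^n)$ by the results quoted above, so I will drop them.

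\textbf{Factoring the condition.} Write $g(u)=P(u^2)+uQ(u^2)$. Then $g(\pm2\sqrt b)=A\pm 2\sqrt b\,B$ with $A=P(4b)$, $B=Q(4b)$ integers, so $N(g)=A^2-4bB^2$. The pair $(A,B)$ is given by two integer linear forms in the coefficients $c_i$. Their kernel is a lattice $\Lambda$ of rank $n-1$: the $g$ divisible by $u^2-4b$.

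\textbf{Fibering.} I fiber over $(A,B)$. For fixed $(A,B)$, the admissible $g$ form a translate of $\Lambda$ intersected with the height ellipsoid. Its point count is $\mathrm{vol}/\operatorname{covol}$ plus an error. The volume is the $(n-1)$-dimensional cross-section of the ellipsoid at $(A,B)$, which is a function $\Phi(A/H,B/H)H^{n-1}$; here $\Phi$ is an explicit Gaussian-type profile, since slices of an ellipsoid are ellipsoids.

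\textbf{Counting the base.} It remains to count $(A,B)\in$ the image lattice with $|A|,|B|\ll H$ and $A^2-4bB^2=m^2$, weighted by $\Phi$.
\begin{itemize}
\item When $b>0$ the norm form is in $\QQ(\sqrt b)$, and $(A-m)(A+m)=4bB^2$ gives a parametrization of the solutions. Each primitive solution $(A,B,m)$ lies on a conic, and the solutions with $|A|,|B|\le H$ number $\asymp H\log H$: sum over the scaling $d$ of primitive points of height $\le H/d$. This reproduces the harmonic series responsible for the $\log H$.
\item Concretely, I parametrize rational points on the conic $A^2-4bB^2=m^2$ by a slope $t=r/s\in\PP^1(\QQ)$. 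The number of integer points on the ray through $(r,s)$ with height $\le H$ is about $H/\Ht(r,s)$. Summing over primitive $(r,s)$ gives $\sum H/\Ht(r,s)\approx cH\log H$.
\item The constant then comes from averaging $\Phi$ along each ray against the angular density. This produces the complete elliptic integral $K$. Its parameter, $\frac{1-\iota}{2}-\frac{2\floor{n/2}+1}{2\ceil{n/2}}$, reflects the relative weights of the forms $A,B$ under the height; these depend on parity through how many even/odd coefficients each form collects. The sign $\iota$ enters through whether the conic is split over $\RR$.
\item The local factor $\rho(|b|)$ arises from counting solutions modulo $p^e\parallel b$, together with the congruence conditions defining the image lattice of $(A,B)$. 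Summing the multiplicities $e$ gives the factor $1+e(p-1)/(p+1)$. The $1/\zeta(2)$ comes from primitivity of $(r,s)$.
\end{itemize}

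\textbf{Error terms.} There are two sources of error: lattice-point errors in the fibers, which are $O(H^{n-2})$ per fiber times $O(H\log H)$ fibers, and the tail of the harmonic sum. Both give $O(H^n)$. The monic case is identical, with one fewer free coefficient: the affine slice $c_n=1$ shifts $(A,B)$ by a constant and lowers the dimension by one, which yields $\kappa_{n-1,\iota}$ and the stated power of $|b|$.

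\textbf{Main obstacle.} I expect the hardest step to be the exact evaluation of the archimedean constant: tracking the ellipsoid metric \eqref{eq:b_height} through the map to $(A,B)$ and reducing the angular integral to $K(m)$. A secondary difficulty is uniformity of the lattice-point error when a fiber is nearly degenerate near the boundary. For that I would use Davenport's lemma.
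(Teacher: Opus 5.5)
Your skeleton is the paper's: the conic $X^2-4bY^2=Z^2$, Davenport's lemma in the $(c_2,\dots,c_n)$-fibers over $(X,Y)$, parametrization by primitive $(s,t)$, and $\log H$ from summing a degree-$(-2)$ homogeneous function over primitive pairs with $\|(s,t)\|\ll\sqrt H$. But the theorem's content is the explicit constant, and there the proposal has a genuine gap.

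\textbf{The factor $\rho(|b|)$.} The mechanism you name is not the one that produces it. Since $X=c_0+\cdots$ and $Y=c_1+\cdots$, the map $(c_0,\dots,c_n)\mapsto(X,Y,c_2,\dots,c_n)$ is unimodular. So $(X,Y)$ ranges over all of $\ZZ^2$, the fibers have covolume $1$, and there are no congruence conditions on the image lattice. The factor comes from the parametrization itself. For $\gcd(s,t)=1$ the vector $(s^2+bt^2,st,s^2-bt^2)$ has content exactly $\gcd(s,b)$, so the integer points on the ray through $[s:t]$ are the multiples of $\mathbf v_b(s,t)/\gcd(s,b)$. That ray contributes $\frac{H^n}{4}\gcd(s,b)F_{n,b}(s,t)+O(H^{n-1})$, where $F_{n,b}(s,t)=\int_\RR\psi_{n,b}(\lambda\mathbf v_b(s,t))\,d\lambda$. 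Your ``$H/\Ht(r,s)$'' silently absorbs this gcd. To extract a constant you then need three further ingredients:
\begin{itemize}
\item Equidistribution of primitive pairs jointly in angle and modulo $|b|$ (Lemma~\ref{lem:weighted_primitive_lattice}, applied on dyadic annuli using homogeneity). This factors $\sum\gcd(s,b)F_{n,b}(s,t)$ as the mean of $\gcd(s,b)$ times $\zeta(2)^{-1}\log\sqrt H\int_0^{2\pi}F_{n,b}(\cos\theta,\sin\theta)\,d\theta$.
\item The observation that the set of contributing $(s,t)$ contains the disc of radius $c\sqrt H$ and lies in the disc of radius $C\sqrt H$, so the fuzzy cutoff costs only $O(H^n)$.
\item The local mean itself, which is $\prod_{p^e\parallel b}\bigl(1+e\frac{p-1}{p+1}\bigr)$ because $\Pr(p^j\mid s)=p^{1-j}/(p+1)$ among primitive pairs.
\end{itemize}

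\textbf{The archimedean constant.} The elliptic integral and the power of $|b|$ are read off the statement, not derived. (Incidentally, the fiber profile has the form $(1-q)_+^{(n-1)/2}$, not Gaussian, though it is never needed.) The computation proceeds as follows.
\begin{itemize}
\item Rescale $x=\sqrt{|b|}\,y$ to reduce to $b=\iota$. This yields $|b|^{-n(3n+1)/4}$ from the fiber Jacobian $|b|^{-(3n+2)(n-1)/4}$ and the planar Jacobian $|b|^{-(n+1)/2}$.
\item In the coordinates $z_n=d_n$, $z_i=\sqrt2\,d_i$, the height region is the unit ball. The coefficient vectors of $\widetilde X$ and $\widetilde Y$ have disjoint supports, hence are orthogonal, with squared lengths $2\floor{n/2}+1$ and $\ceil{n/2}/2$.
\item Then $F_{n,\iota}(s,t)$ is a central hyperplane section of a ball, divided by the length of the coefficient vector of $\mu=U\widetilde Y-V\widetilde X$. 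Here $U=s^2+\iota t^2$ and $V=st$, which is where $\iota$ actually enters. This gives \eqref{eq:F_direct}.
\item Rewriting the denominator in terms of $\sin^2 2\theta$ produces $K$ with exactly the stated parameter.
\end{itemize}

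\textbf{The monic case.} ``One fewer free coefficient'' does not by itself give $\kappa_{n-1,\iota}$. You must check that deleting $d_0$ (the coordinate tied to $c_n$) changes the two squared lengths to $2\ceil{n/2}-1$ and $\floor{n/2}/2$. These happen to coincide with the degree-$(n-1)$ values, and that coincidence is what yields $\kappa_{n-1,\iota}$.
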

\begin{proof}
For $g(z)=\sum_{j=0}^n c_j z^j$, write
\[
  g(\pm2\sqrt b)=X\pm2Y\sqrt b,
\]
where
\[
  X=\sum_{j=0}^{\floor{n/2}}c_{2j}(4b)^j
  \textand
  Y=\sum_{j=0}^{\ceil{n/2} - 1}c_{2j+1}(4b)^j.
\]
By Lemma \ref{lem:G123_conds}\ref{it:G1}, the condition for $G_f \subseteq G_1$ is
\begin{equation}\label{eq:conic}
  X^2-4bY^2=Z^2. 
\end{equation}
A parametrization of the $\PP^2(\QQ)$-points of this conic is given by
\[
  [X : Y : Z] = [s^2+bt^2 : st : s^2-bt^2], \quad [s : t] \in \PP^1(\QQ).
\]
Over $\ZZ$, we must account for scaling.  For a primitive pair $(s,t)$, put
\[
  \mathbf v_b(s,t)=(s^2+bt^2,st).
\]
Since
\[
  \gcd(s^2+bt^2,st,s^2-bt^2)=\gcd(s,b),
\]
the nonzero integer solutions of
\eqref{eq:conic} are parametrized by
\begin{equation}\label{eq:param_conic}
  (X,Y,Z)=\frac{\lambda}{\gcd(s,b)}
  (s^2+bt^2,st,s^2-bt^2),
\end{equation}
where $\lambda\in\ZZ\setminus\{0\}$ and $\gcd(s,t)=1$.  Away from
$Z=0$, which contributes only $O_{n,b}(H^n)$ polynomials, a given pair
$(X,Y)$ occurs four times in this parametrization, corresponding to the
symmetries $Z\mapsto-Z$ and $(s,t)\mapsto(-s,-t)$.

Observe that the polynomial $g$ is determined by $X$, $Y$, and its $n-1$ upper coefficients: for $(X,Y)\in\RR^2$ and $\mathbf c=(c_2,\ldots,c_n)\in\RR^{n-1}$, let
\[
  c_0=X-\sum_{j=1}^{\floor{n/2}}c_{2j}(4b)^j,
  \qquad
  c_1=Y-\sum_{j=1}^{\ceil{n/2}-1}c_{2j+1}(4b)^j,
\]
be the coefficients that make
\[
  g_{X,Y,\mathbf c}(z)=\sum_{j=0}^n c_jz^j
\]
have these values for $X$ and $Y$, and let
\[
  f_{X,Y,\mathbf c}(x)=x^n g_{X,Y,\mathbf c}\left(x+\frac bx\right)
\]
be the corresponding reciprocal polynomial. Let
\begin{equation}\label{eq:psi_def}
  \psi_{n,b}(X,Y)
  =\operatorname{vol}_{n-1}\left\{(c_2,\ldots,c_n)\in\RR^{n-1}:
  \Ht_b(f_{X,Y,\mathbf c})\leq1\right\}
\end{equation}
be the volume in the independent coordinates $c_2,\ldots,c_n$ for which $f$ lies within the unit-height ellipsoid $\Ht_b(f)\leq1$. Equivalently, since the linear change of coordinates
\[
  (c_0,\ldots,c_n)\longmapsto(X,Y,c_2,\ldots,c_n)
\]
has determinant $1$, the function $\psi_{n,b}$ is the density with
respect to $dX\,dY$ of the pushforward of Lebesgue measure on the unit-height ellipsoid under the map $g\mapsto(X,Y)$.
In particular, $\psi_{n,b}$ is compactly supported and even.

Applying Davenport's lemma in the affine fibers gives, uniformly
for integer $(X,Y)$ in the image of the ellipsoid $\Ht_b(f)\leq H$,
\begin{equation}\label{eq:fiber_count}
  \#\left\{(c_2,\ldots,c_n)\in\ZZ^{n-1}:
  \Ht_b(f_{X,Y,\mathbf c})\leq H\right\}
  =H^{n-1}\psi_{n,b}(X/H,Y/H)+O_{n,b}(H^{n-2}).
\end{equation}
Thus \eqref{eq:param_conic}, including its fourfold multiplicity, gives
\begin{equation}\label{eq:conic_double_sum}
  \E_{n,b}(G_1; H)
  =\frac{H^{n-1}}4
  \sum_{\substack{(s,t)\in\ZZ^2\\(s,t)=1}}
  \sum_{\lambda\in\ZZ\setminus\{0\}}
  \psi_{n,b}\left(\frac{\lambda}{H \gcd(s,b)}\mathbf v_b(s,t)\right)
  +O_{n,b}(H^n).
\end{equation}
Indeed, the height bound forces
$\|(s,t)\|\ll_{n,b}\sqrt H$ and
$|\lambda|\ll_{n,b}H/\|(s,t)\|^2$.  Hence there are
$O_{n,b}(H\log H)$ relevant parameter triples, so the accumulated error
from \eqref{eq:fiber_count} is $O_{n,b}(H^{n-1}\log H)$.

Let $\mathcal S_H$ be the set of primitive pairs $(s,t)$ for which the
inner sum in \eqref{eq:conic_double_sum} is nonzero. The support of
$\psi_{n,b}$ is compact (it is the projection of the unit-height ellipsoid to the
$(X,Y)$-plane, which is an ellipse centered at the origin). Noting that $\mathbf v_b(s,t) \neq 0$ and
$1\leq\gcd(s,b)\leq |b|$, there are constants $0<c<C$, depending only
on $n$ and $b$, such that
\begin{equation}\label{eq:SH_sandwich}
  \{(s,t)\in\ZZ^2:(s,t)=1,\ 0<\|(s,t)\|\leq c\sqrt H\}
  \subseteq \mathcal S_H
  \subseteq
  \{(s,t)\in\ZZ^2:(s,t)=1,\ 0<\|(s,t)\|\leq C\sqrt H\}.
\end{equation}
For the first inclusion one may take $\lambda=1$; the second follows
from compactness of the support.

For a fixed $(s,t)\in\mathcal S_H$, the inner sum in \eqref{eq:conic_double_sum} is a one-dimensional
Riemann sum.  Thus
\begin{equation}\label{eq:lambda_rsum}
  \sum_{\lambda\in\ZZ\setminus\{0\}}
  \psi_{n,b}\left(\frac{\lambda}{H \gcd(s,b)}\mathbf v_b(s,t)\right)
  =H \gcd(s,b)\int_{\RR}
  \psi_{n,b}\bigl(\lambda\mathbf v_b(s,t)\bigr)\,d\lambda
  +O_{n,b}(1).
\end{equation}
Since $|\mathcal S_H|=O_{n,b}(H)$, summing this error contributes
$O_{n,b}(H^n)$ to \eqref{eq:conic_double_sum}.  Therefore, on putting
\[
  F_{n,b}(s,t)
  =\int_{\RR}\psi_{n,b}\bigl(\lambda\mathbf v_b(s,t)\bigr)\,d\lambda,
\]
we obtain
\begin{equation}\label{eq:conic_reduced}
  \E_{n,b}(G_1;H)
  =\frac{H^n}{4}
  \sum_{(s,t)\in\mathcal S_H}\gcd(s,b)F_{n,b}(s,t)
  +O_{n,b}(H^n).
\end{equation}
Notice that $F_{n,b}$ is continuous and
homogeneous of degree $-2$ in $(s,t)$.

We now use the following standard weighted form of equidistribution of primitive lattice points; its proof is included in Appendix~\ref{app:primitive_lattice}.

\begin{lemma}\label{lem:weighted_primitive_lattice}
Let $q\geq1$, and put
\[
  P_q=\{(u,v)\in(\ZZ/q\ZZ)^2:\gcd(u,v,q)=1\}.
\]
Let $\Phi:P_q\to\RR$, and write
\[
  \langle\Phi\rangle_q
  =\frac{1}{|P_q|}\sum_{(u,v)\in P_q}\Phi(u,v).
\]
Suppose that $W:\RR^2\to\RR$ is bounded and compactly supported, and is
$C^1$ away from a finite union of (necessarily compact) piecewise $C^1$ curves.  Then,
as $T\to\infty$,
\begin{equation}\label{eq:weighted_primitive_lattice}
  \sum_{\substack{(s,t)\in\ZZ^2\\(s,t)=1}}
  \Phi(\bar s,\bar t)W(s/T,t/T)
  =\frac{T^2}{\zeta(2)}\langle\Phi\rangle_q
    \int_{\RR^2}W(s,t)\,ds\,dt
    +O_{q,\Phi,W}(T\log(2T)),
\end{equation}
where $(\bar s,\bar t)$ denotes reduction modulo $q$.
\end{lemma}

Take
$q=|b|$ and
\[
  \Phi_b(u,v)=\gcd(u,b),\qquad (u,v)\in P_{|b|}.
\]
Apply Lemma~\ref{lem:weighted_primitive_lattice}
to
\[
  W_{n,b}(s,t)
  =\mathbf 1_{\{1<\sqrt{s^2+t^2}\leq2\}}F_{n,b}(s,t).
\]
By homogeneity, for $T\geq1$ this gives
\begin{align}\label{eq:dyadic_primitive_sum}
  &\sum_{\substack{(s,t)\in\ZZ^2,\ (s,t)=1\\
          T<\|(s,t)\|\leq2T}}
  \gcd(s,b)F_{n,b}(s,t) \notag\\
  &\qquad=\frac{\<\Phi\>_{|b|}}{\zeta(2)}
    \int_{1<\sqrt{s^2+t^2}\leq2}F_{n,b}(s,t)\,ds\,dt
    +O_{n,b}\left(\frac{\log(2T)}{T}\right).
\end{align}
In polar coordinates the integral here is
\[
  \log 2\int_0^{2\pi}F_{n,b}(\cos\theta,\sin\theta)\,d\theta.
\]
Summing \eqref{eq:dyadic_primitive_sum} over dyadic annuli, and absorbing
the first and last partial annuli into $O_{n,b}(1)$, yields
\begin{equation}\label{eq:weighted_primitive_sum}
  \sum_{\substack{(s,t)\in\ZZ^2,\ (s,t)=1\\0<\|(s,t)\|\leq R}}
  \gcd(s,b)F_{n,b}(s,t)
  =\frac{\<\Phi\>_{|b|}}{\zeta(2)}\log R
  \int_0^{2\pi}F_{n,b}(\cos\theta,\sin\theta)\,d\theta
  +O_{n,b}(1).
\end{equation}

This explains why the archimedean factor naturally takes the form of a
2D real integral.  Define
\begin{equation}\label{eq:arch_factor}
  \mathcal A_{n,b}
  =\int_{\RR^2}\psi_{n,b}(s^2+bt^2,st)\,ds\,dt.
\end{equation}
If $(s,t)=r(\cos\theta,\sin\theta)$ with $r>0$, then
$\mathbf v_b(s,t)=r^2\mathbf v_b(\cos\theta,\sin\theta)$.  Therefore,
putting $u=r^2$ and using the evenness of $\psi_{n,b}$,
\begin{align}\label{eq:arch_factor_polar}
  \mathcal A_{n,b}
  &=\frac12\int_0^{2\pi}\int_0^\infty
    \psi_{n,b}\bigl(u\mathbf v_b(\cos\theta,\sin\theta)\bigr)
    \,du\,d\theta \notag\\
  &=\frac14\int_0^{2\pi}F_{n,b}(\cos\theta,\sin\theta)\,d\theta.
\end{align}
Consequently \eqref{eq:weighted_primitive_sum} becomes
\begin{equation}\label{eq:weighted_primitive_sum_A}
  \sum_{\substack{(s,t)\in\ZZ^2,\ (s,t)=1\\0<\|(s,t)\|\leq R}}
  \gcd(s,b)F_{n,b}(s,t)
  =\frac{4\<\Phi\>_{|b|}}{\zeta(2)}\mathcal A_{n,b}\log R+O_{n,b}(1).
\end{equation}

To evaluate \eqref{eq:arch_factor}, we first eliminate the dependence on $|b|$. Observe that
\[
  \widetilde{f}(y) = f(|b|^{1/2}y) = y^n\sum_{j=0}^n w_j(y+\iota y^{-1})^j
\]
is $\iota$-reciprocal, where $\iota=\sgn(b)$ and $w_j=|b|^{(n+j)/2}c_j$. Now
\begin{equation}\label{eq:height_Q}
  \Ht_b(f)^2 = \Ht_\iota(\widetilde{f})^2 = Q_{n,\iota}(w_0,\ldots,w_n)
\end{equation}
is given by a fixed quadratic form
\begin{equation}\label{eq:Q_nh}
  Q_{n,\iota}(\mathbf w)
  =\left\|y^n\sum_{j=0}^n w_j(y+\iota y^{-1})^j\right\|_2^2,
\end{equation}
where $\|\cdot\|_2$ denotes the Euclidean norm of the coefficient vector.

Let $\widetilde X$ and $\widetilde Y$ be the values of the linear
functionals $X$ and $Y$, with $b=\iota$, applied to $\widetilde f$:
\[
  \widetilde X=\sum_{j=0}^{\floor{n/2}}w_{2j}(4\iota)^j,
  \qquad
  \widetilde Y=\sum_{j=0}^{\ceil{n/2}-1}w_{2j+1}(4\iota)^j;
\]
observe that
\[
  \widetilde X=|b|^{n/2}X,
  \qquad
  \widetilde Y=|b|^{(n+1)/2}Y.
\]
Since $Q_{n,\iota}(\mathbf w)=\Ht_\iota(\widetilde f)^2$, the fiber-volume
function for the unit ellipsoid $Q_{n,\iota}\leq1$ with respect to
$\widetilde X,\widetilde Y$ is precisely the previously defined
$\psi_{n,\iota}$.  Moreover, in the fiber coordinates $c_2,\ldots,c_n$ the
change from $w_2,\ldots,w_n$ back to $c_2,\ldots,c_n$ has Jacobian
\[
  \prod_{j=2}^n |b|^{-(n+j)/2}
  =|b|^{-(3n+2)(n-1)/4}.
\]
Hence
\[
  \psi_{n,b}(X,Y)
  =|b|^{-(3n+2)(n-1)/4}
  \psi_{n,\iota}
  \bigl(|b|^{n/2}X,|b|^{(n+1)/2}Y\bigr).
\]
Finally, in \eqref{eq:arch_factor} rescale the planar variables by
\[
  s\mapsto |b|^{-n/4}s,
  \qquad
  t\mapsto |b|^{-(n+2)/4}t.
\]
The Jacobian contributes $|b|^{-(n+1)/2}$, while the two arguments of
$\psi_{n,\iota}$ become $s^2+\iota t^2$ and $st$, respectively.
Therefore
\begin{equation}\label{eq:arch_factor_scale}
  \mathcal A_{n,b}
  =|b|^{-n(3n+1)/4}
  \int_{\RR^2}\psi_{n,\iota}(s^2+\iota t^2,st)\,ds\,dt.
\end{equation}

For the normalized integral in \eqref{eq:arch_factor_scale}, write
\[
  \widetilde f(y)=\sum_{i=0}^{2n}d_i y^i.
\]
Since $\widetilde f$ is $\iota$-reciprocal, $d_0,\ldots,d_n$ are free
coordinates, and
\[
  \Ht_\iota(\widetilde f)^2=d_n^2+2\sum_{i=0}^{n-1}d_i^2.
\]
The change from $w_0,\ldots,w_n$ to these free coefficients has
absolute determinant $1$.  Moreover,
\[
  \frac{\widetilde f(\sqrt \iota)}{(\sqrt \iota)^n}
  =\widetilde X+2\sqrt \iota\,\widetilde Y.
\]
Using the $\iota$-reciprocity relation to pair coefficients equidistant
from the center therefore gives
\begin{equation}\label{eq:XY_free_coefficients}
  \widetilde X=d_n+2\sum_{j=1}^{\floor{n/2}}\iota^j d_{n-2j},
  \qquad
  \widetilde Y=\sum_{j=0}^{\ceil{n/2}-1}\iota^{j+1}d_{n-2j-1}.
\end{equation}

We can now compute $F_{n,\iota}$ directly as the volume of an ellipsoid.
Fix $(s,t)\ne(0,0)$ and put
\[
  U=s^2+\iota t^2,\qquad V=st.
\]
On the $(\widetilde X,\widetilde Y)$-plane make the determinant-one
change of coordinates
\[
  \lambda=\frac{U\widetilde X+V\widetilde Y}{U^2+V^2},
  \qquad
  \mu=U\widetilde Y-V\widetilde X.
\]
Thus the line $(\widetilde X,\widetilde Y)=\lambda(U,V)$ is exactly
$\mu=0$, and the definition of $\psi_{n,\iota}$ shows that
\[
  F_{n,\iota}(s,t)=\int_{\RR}\psi_{n,\iota}(\lambda U,\lambda V)\,d\lambda
\]
is the $n$-dimensional volume of the section $\mu=0$ of the unit-height
ellipsoid, measured in the coordinates $\lambda$ and the fiber
coordinates.

Now set
\[
  z_n=d_n,\qquad z_i=\sqrt2\,d_i\quad(0\le i<n).
\]
The height ellipsoid becomes the unit ball $\sum z_i^2\le1$, while the
volume element contributes a factor $2^{-n/2}$.  By
\eqref{eq:XY_free_coefficients}, the coefficient vectors of
$\widetilde X$ and $\widetilde Y$ in the $z_i$-coordinates are
orthogonal and have squared lengths
\[
  2\floor{n/2}+1
  \qquad\text{and}\qquad
  \frac{\ceil{n/2}}{2},
\]
respectively.  Hence the coefficient vector of
$\mu=U\widetilde Y-V\widetilde X$ has squared length
\[
  D^2=\frac{\ceil{n/2}}{2}U^2+(2\floor{n/2}+1)V^2.
\]
After an orthogonal change of the $z_i$, we may take $\mu=D\xi_0$.
The section $\mu=0$ is then an ordinary unit $n$-ball, of volume $V_n$.
Thus its volume in the original coordinates is $V_n$ times the product
of the scaling factors $2^{-n/2}$ and $D^{-1}$, yielding
\begin{equation}\label{eq:F_direct}
  F_{n,\iota}(s,t)
  =\frac{V_n}{2^{(n-1)/2}
    \sqrt{\ceil{n/2}(s^2+\iota t^2)^2
    +2(2\floor{n/2}+1)s^2t^2}}.
\end{equation}

Using \eqref{eq:arch_factor_polar} with $b=\iota$, we obtain
\begin{align*}
  \int_{\RR^2}\psi_{n,\iota}(s^2+\iota t^2,st)\,ds\,dt
  &=\frac{V_n}{2^{(n+3)/2}\sqrt{\ceil{n/2}}}
    \int_0^{2\pi}
    \frac{d\theta}{\sqrt{
    (\cos^2\theta+\iota\sin^2\theta)^2
    +\dfrac{4\floor{n/2}+2}{\ceil{n/2}}
      \cos^2\theta\sin^2\theta}} \\
  &=\frac{V_n}{2^{(n+3)/2}\sqrt{\ceil{n/2}}}
    \int_0^{2\pi}
    \frac{d\theta}{\sqrt{1-
    \left(\dfrac{1-\iota}{2}
      -\dfrac{2\floor{n/2}+1}{2\ceil{n/2}}\right)
    \sin^2(2\theta)}} \\
  &=\frac{V_n}{2^{(n-1)/2}\sqrt{\ceil{n/2}}}
    K\left(\frac{1-\iota}{2}
      -\frac{2\floor{n/2}+1}{2\ceil{n/2}}\right).
\end{align*}
Combining this with
\eqref{eq:arch_factor_scale} gives
\begin{equation}\label{eq:arch_factor_value}
  \mathcal A_{n,b}
  =\frac{V_n}{2^{(n-1)/2}\sqrt{\ceil{n/2}}}
    K\left(\frac{1-\iota}{2}
    -\frac{2\floor{n/2}+1}{2\ceil{n/2}}\right)
    |b|^{-n(3n+1)/4}.
\end{equation}

It remains to compute the arithmetic factor $\<\Phi\>_{|b|}$, which is the mean value of $\gcd(s,b)$ among primitive pairs.  For a prime
$p^e\parallel b$ and a primitive pair $(s,t)$,
\[
  v_p(\gcd(s,b))=\min\{v_p(s),e\}.
\]
Conditioned on primitivity at $p$, for every $j\geq1$ we have
\[
  \Pr(p^j\mid s)=\frac{p^{1-j}}{p+1}.
\]
Since
\[
  p^{\min\{v_p(s),e\}}
  =1+(p-1)\sum_{j=1}^e p^{j-1}\mathbf 1_{p^j\mid s},
\]
its local mean is
\[
  1+(p-1)\sum_{j=1}^e p^{j-1}\frac{p^{1-j}}{p+1}
  =1+e\frac{p-1}{p+1}.
\]
The local conditions at the distinct primes dividing $b$ are
independent by the Chinese remainder theorem.  Hence
\begin{equation}\label{eq:gcd_mean}
  \<\Phi\>_{|b|}
  =\prod_{p^e\parallel |b|}\left(1+e\frac{p-1}{p+1}\right) = \rho(|b|).
\end{equation}

We can now return to the actual set $\mathcal S_H$ in
\eqref{eq:conic_reduced}.  Since $F_{n,b}\geq0$, the sandwich
\eqref{eq:SH_sandwich} together with \eqref{eq:weighted_primitive_sum_A} shows that
replacing $\mathcal S_H$ by the primitive pairs with
$0<\|(s,t)\|\leq\sqrt H$ affects our estimate of $\E_{n,b}$ by only
$O_{n,b}(H^n)$: the difference is bounded by the contribution of the
fixed-ratio annulus
$c\sqrt H<\|(s,t)\|\leq C\sqrt H$.  Hence
\eqref{eq:weighted_primitive_sum_A} with $R=\sqrt H$, together with
\eqref{eq:gcd_mean} and \eqref{eq:conic_reduced}, gives
\begin{align*}
  \#\{f:\Ht_b(f)\leq H,\ X^2-4bY^2\text{ is a square}\}
  &=\frac{\rho(|b|)}{2\zeta(2)}\mathcal A_{n,b}H^n\log H
    +O_{n,b}(H^n)\\
  &=\kappa_{n,\iota}\rho(|b|)|b|^{-n(3n+1)/4}H^n\log H
    +O_{n,b}(H^n),
\end{align*}
by \eqref{eq:arch_factor_value} and \eqref{eq:kappa_nonmonic}.
This proves
\eqref{eq:G1_nonmonic_asymptotic}.  The conditions that $g$ have degree $n$, be separable, and have $G_g=S_n$ remove only $O_{n,b}(H^n)$ polynomials, by \cite{Bhargava_vdW} and the arguments of \cite{ABO_RecipPolys}.

For the monic count, set $c_n=1$.  After dilation by $H$, the
corresponding affine section tends, in the normalized free coefficient
coordinates above, to $d_0=0$.  The height ball on this section is
\[
  d_n^2+2\sum_{i=1}^{n-1}d_i^2\leq1.
\]
The same direct calculation applies, now in dimension $n$: after the
$\sqrt2$-rescaling there are $n-1$ scaled coordinates $z_i$, and the squared
lengths of the coefficient vectors of
$\widetilde X$, $\widetilde Y$ in these coordinates become
\[
  2\ceil{n/2}-1
  \qquad\text{and}\qquad
  \frac12\floor{n/2}.
\]
These are exactly the two values appearing in the preceding calculation with
$n$ replaced by $n-1$.  Thus the real-place constant is $\kappa_{n-1,\iota}$, exactly as in \eqref{eq:kappa_nonmonic} with $n$ replaced by $n-1$.
The same argument, now with $n$ free
coefficients, yields the power $|b|^{-3n(n-1)/4}$, the main term
$H^{n-1}\log H$, and an error $O_{n,b}(H^{n-1})$.
\end{proof}

\subsection{Counting \texorpdfstring{$G_2$}{G2}-polynomials}\label{sec:G2}
In \cite{ABO_RecipPolys}, the most difficult group to deal with was $G_2$. We find here that the method works without essential change, so we briefly explain the adaptations needed to prove the following:
\begin{theorem} \label{thm:G2}
  For fixed $b\neq0$ and $n \geq 4$,
  \begin{align}
    \E_{n,b}(G_2; H) &\ll_{n,b} H^{n}  \label{eq:G2_nonmonic} \\
    \E_{n,b}^\monic(G_2; H) &\ll_{n,b} H^{n - 1}. \label{eq:G2_monic}
  \end{align}
\end{theorem}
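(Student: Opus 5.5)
By Lemma \ref{lem:G123_conds}\ref{it:G2}, a polynomial counted by $\E_{n,b}(G_2;H)$ satisfies
\[
(X^2-4bY^2)\disc g = \square ,
\]
where $X,Y$ are the linear forms in the coefficients of $g$ introduced in the proof of Theorem \ref{thm:G1}. The plan is to follow the $G_2$ argument of \cite{ABO_RecipPolys} and split according to the size of the squarefree part of $N := X^2-4bY^2 = g(2\sqrt b)g(-2\sqrt b)$.

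Write $N = m k^2$ with $m$ squarefree. The condition above then becomes $m\,\disc g = \square$, so $\disc g$ has squarefree part $m$. In particular the quadratic resolvent field $\QQ(\sqrt{\disc g})=\QQ(\sqrt m)$, and the discriminant of the degree-$n$ field $K_g$ is divisible, up to bounded factors, by the primes dividing $m$.

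\textbf{Small $m$.} Suppose $|m|\le M$ for a threshold $M$ (a power of $H$) to be chosen. Then $g$ lies on one of the conics $X^2-4bY^2=mk^2$. For each fixed $m$ the count is as in Theorem \ref{thm:G1}, using the parametrization of $\QQ$-points of the twisted conic. Summing over $m$ with the local weights, I expect a bound like
\[
\sum_{|m|\le M} H^{n-1}\cdot H\cdot \frac{\log H}{\sqrt{|m|}}\cdot(\text{local densities}).
\]
This is too weak if summed naively. So instead, for small $m$ I would use the alternative condition that $\disc g$ is $m$ times a square: for $n\ge 4$, Bhargava's bounds on $S_n$-polynomials whose discriminant has a prescribed quadratic field (the double-discriminant and Fourier-equidistribution argument of \cite{Bhargava_vdW}, as adapted in \cite[\S3]{ABO_RecipPolys}) give a saving of $H^{-1-\delta}$ per $m$, up to $m^{O(1)}$ factors. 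Combined with the codimension-one conic condition, this yields $O(H^n)$ after summing over $|m|\le M$.

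\textbf{Large $m$.} Here the prime factors of $m$ divide $\disc g$ to odd order, so for $p\mid m$ the polynomial $g$ is ramified at $p$. Moreover, $p\mid g(2\sqrt b)g(-2\sqrt b)$ to odd order. I would count $g$ with two simultaneous congruence conditions at each $p\mid m$: $p$ divides $N$, and $p$ divides $\disc g$ with nonsquarefree behaviour ruled out. Together these have density $\ll p^{-2}$, since they are two independent codimension-one conditions mod $p$ for $p\nmid 2b$, being transversal for $S_n$-generic $g$. The uniformity-in-$m$ ("geometric sieve") estimates from \cite{ABO_RecipPolys}, which are themselves adaptations of Bhargava's, then bound the count by roughly
\[
H^{n+1}\sum_{m>M} m^{-2+\epsilon} \ll H^{n+1}M^{-1+\epsilon}.
\]
Since only $N$ and $\disc g$ enter, and both are polynomials in the coefficients of $g$ whose reductions mod $p$ depend on $b$ only through finitely many bad primes, the arguments transfer with constants depending on $n,b$.

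\textbf{Main obstacle.} The hard step is balancing the two ranges, i.e. getting the large-$m$ tail below $H^n$ without losing $H^\epsilon$. As in \cite{ABO_RecipPolys}, I would handle primes $p\mid m$ by factoring $N=g(2\sqrt b)g(-2\sqrt b)$ over $\ZZ[\sqrt b]$. This gives a linear condition modulo an ideal of norm $p$ for the coefficients of $g$, combined with the ramification condition. I would then use the double-discriminant trick (fibering over $c_0$, or here over $X$) to make the count uniform up to $m\sim H^{2}$.

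\textbf{Monic case.} The monic bound \eqref{eq:G2_monic} follows identically, with one fewer free coefficient throughout.
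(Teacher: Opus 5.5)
Your split by the squarefree part $m$ of $N=g(2\sqrt b)g(-2\sqrt b)$ leaves a genuine gap. The two ranges cannot be made to meet, and the step you call the ``main obstacle'' is in fact the whole theorem. For typical $g$ one has $|N|\asymp H^2$ with small square part, so nothing confines the $G_2$-polynomials to small $m$, and $m$ ranges up to $\asymp H^2$.

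Your tail estimate $H^{n+1}\sum_{m>M}m^{-2+\epsilon}\ll H^{n+1}M^{-1+\epsilon}$ is $O(H^n)$ only when $M\geq H^{1+\epsilon}$. It is also not justified once $m$ exceeds the box size $H$, because congruence densities modulo $m>H$ do not control lattice-point counts in a box of side $H$. The double-discriminant step as you sketch it (fix $c_1,\ldots,c_n$, determine $c_0$ modulo $m$) helps only for $m>H^{1+\delta}$.

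On the small side, as you note, the conic condition alone is far too weak. Your substitute is a saving of $H^{-1-\delta}$ for each prescribed quadratic resolvent $\QQ(\sqrt m)$, ``combined with'' the conic condition. This is not an available result, and two savings from different conditions cannot simply be multiplied without a joint argument on the subfamily. Even granting a bound $|m|^{O(1)}H^{n-\delta}$ for each $m$, summing over $|m|\leq M$ only tolerates $M\leq H^{c\delta}$ for some small $c>0$. For $H^{c\delta}<m\leq H^{1+\delta}$ you have no argument at all.

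The missing idea is to organize by $D=|\Disc K_g|$ and its radical $C$, running Bhargava's three cases, rather than by $m$. The new input is Lemma~\ref{lem:case1}. After discarding primes $p\leq n$ and $p\mid 2b$, suppose $v_p(\Disc K_g)=2k_p-1$ is odd. Then $v_p(\disc g)$ is odd, hence $v_p(N)$ is odd, since $N\disc g$ is a square. This forces $b$ to be a square modulo $p$ and $g$ to vanish at $2r_p$ or $-2r_p$ modulo $p$, on top of $\ind(\sigma_p)\geq 2k_p-1$.

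This extra pointed condition upgrades the square-divisibility bound to $O(1)^{\omega(C)}H^{n+1}/D'^2$, where $D'^2$ is the least square multiple of $D$. That is the same estimate as in the $A_n$ case, where $\Disc K$ is itself a square, and it settles Case~I ($D\geq H^{2+2\delta}$, $C\leq H^{1+\delta}$). Case~II (small $D$) is Bhargava's argument verbatim, since reciprocity plays no role there. Case~III ($C>H^{1+\delta}$) uses $R=\Res_{c_0}(h,\partial h/\partial c_0)$ with $h=N\disc g$, because $p\mid C$ forces $p^2\mid h$ for mod-$p$ reasons, hence $p\mid R$.

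You have identified the right local ingredients: the linear condition modulo a prime of $\ZZ[\sqrt b]$ above $p$, and the double discriminant. But tracking only $m$ throws away the square part of $\Disc K_g$. That square part is exactly what Cases~I and~II exploit for polynomials whose $m$ is small but whose $\Disc K_g$ is either small or has a large square part.
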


By Lemma \ref{lem:G123_conds}\ref{it:G2}, we wish to count $g$ such that $g\bigl(2\sqrt{b}\bigr) g\bigl(-2\sqrt{b}\bigr) \disc g$ is a square. Let $\delta$ be a small constant, such as $1/(4n)$. We first prove the following analogue of Lemma 5.7 of \cite{ABO_RecipPolys}:
\begin{lemma} \label{lem:case1}
  Let $D$ be a positive integer. Let $C = \prod_{p\mid D} p$ be its radical; let $D'^2 = \prod_{p\mid D} p^{2\ceil{v_p(D)/2}}$ be its smallest square multiple. Assume that $C < H^{1 + \delta}$. The number of $b$-reciprocal integer polynomials $f$ of height $\leq H$ for which $G_f \subseteq G_2$ and $D \mid \Disc K_g$ is
  \[
  \ll \frac{O(1)^{\omega(C)} H^{n+1}}{D'^2}.
  \]
\end{lemma}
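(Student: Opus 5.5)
We would adapt the proof of Lemma~5.7 of \cite{ABO_RecipPolys}, which is a local-density sieve. The strategy is to translate the two conditions, $D \mid \Disc K_g$ and $G_f \subseteq G_2$, into congruence conditions on the coefficient vector $(c_0,\ldots,c_n)$ of $g$ modulo $D'^2$ (or modulo $C^2$ at the relevant primes). We then count lattice points in the height ellipsoid satisfying these congruences. Since $\Ht_b(f)$ is equivalent to a fixed norm on the coefficients of $g$, the ellipsoid has volume $\asymp_{n,b} H^{n+1}$.

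\textbf{Step 1: local conditions at each prime.} For each prime $p \mid D$, say $p^k \parallel D$, the condition $p^k \mid \Disc K_g$ makes $K_g$ ramified at $p$. The key extra input comes from $G_f \subseteq G_2$. By Lemma~\ref{lem:G123_conds}\ref{it:G2}, $g(2\sqrt b)\,g(-2\sqrt b)\,\disc g$ is a square, and $\disc g = \ind(g)^2 \Disc K_g$. Hence
\[
g(2\sqrt b)\,g(-2\sqrt b)\cdot \Disc K_g
\]
is a square up to a squared index factor. Consequently the $p$-adic valuation of $X^2 - 4bY^2 = g(2\sqrt b)\,g(-2\sqrt b)$ must have the same parity as $v_p(\Disc K_g)$.

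For odd $k$ (more precisely, whenever $v_p(\Disc K_g)$ is odd), this forces $p \mid X^2 - 4bY^2$ in addition to ramification. That extra divisibility is what upgrades the saving from $p^{k}$ to $p^{2\ceil{k/2}}$.

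Following \cite{ABO_RecipPolys}, we would show that the set of residue classes of $g$ modulo $p^{2\ceil{k/2}}$ satisfying both conditions has density $\ll O(1)/p^{2\ceil{k/2}}$. The ramification densities are the standard ones from Bhargava's work: the density of $g$ with $p^k \mid \Disc K_g$ is $O(p^{-k})$. The condition $p \mid g(2\sqrt b)\,g(-2\sqrt b)$ is a codimension-one condition. The step needing care is that these two conditions are approximately independent, i.e.\ that the root of $g$ responsible for ramification is not forced to sit at $\pm 2\sqrt b$. The cases where it does sit there are themselves of higher codimension, so we would handle them separately. For primes $p \mid 2b$ only boundedly many primes are involved, so losses there are absorbed into the implied constant.

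\textbf{Step 2: the lattice-point count.} By the Chinese remainder theorem, the admissible classes modulo $M = \prod_{p \mid D} p^{2\ceil{v_p(D)/2}} = D'^2$ have total density $\ll O(1)^{\omega(C)}/D'^2$. We then count integer points of the ellipsoid in each admissible class.

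This step is the main obstacle, because $D'^2$ may be far larger than $H$, so a naive Davenport count has an error term that swamps the main term. As in \cite{ABO_RecipPolys}, the fix is to use that the conditions are actually determined modulo $C$ or $C^2$ together with a root-location description: $g$ has a multiple root modulo $p$ at some $r \bmod p$. We would count by fibering over the low coefficients, or by using the parametrization of ramification via $g \equiv (x-r)^2 h \pmod p$. This is exactly where the hypothesis $C < H^{1+\delta}$ enters.

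Concretely, we would impose only the conditions modulo $C$, which is a modulus at most $H^{1+\delta}$, on two coefficients (say $c_0, c_1$, after shifting the multiple root to $0$). We apply Davenport's lemma in the remaining $n-1 \geq 3$ coordinates, where the moduli are harmless, and sum over the two constrained coordinates directly. The error terms then total $\ll H^{n}\cdot H^{1+\delta}/C \cdot(\ldots)$, which is dominated by the main term when $\delta$ is small. The higher-power conditions ($p^k$ with $k \geq 2$) are bounded via the trivial inclusion into the radical conditions times the local density ratios, which is the same bookkeeping as in \cite{ABO_RecipPolys}, Lemma~5.7.

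In the $b$-reciprocal setting, the only changes from \cite{ABO_RecipPolys} are that $g(\pm 2)$ is replaced by $X \pm 2Y\sqrt b$ and that the height ellipsoid is distorted by $b$. Both are absorbed into constants depending on $n$ and $b$.
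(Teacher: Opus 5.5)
Your Step 1 has the right arithmetic input. When $v_p(\Disc K_g)$ is odd, the square condition of Lemma~\ref{lem:G123_conds}\ref{it:G2} forces $v_p\bigl(g(2\sqrt b)g(-2\sqrt b)\bigr)$ to be odd, and this is exactly the paper's case split.

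The paper, however, phrases every condition \emph{modulo $p$}, not modulo $p^{2\ceil{k/2}}$. After discarding $p\le n$ and $p\mid 2b$, one has $\ind(\tilde g \bmod p)\ge v_p(\Disc K_g)$. So $p^{v_p(D)}\mid\Disc K_g$ becomes the statement that the splitting type $\sigma_p$ of $g \bmod p$ has index $\ge v_p(D)$.

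In the odd case $v_p(\Disc K_g)=2k_p-1$, the parity argument gives two further facts. First, $b$ must be a square mod $p$, since an inert $p$ gives a norm of even valuation. Second, $g$ then has a linear factor $u\mp 2r_p$ mod $p$. This is a \emph{pointed} splitting type, of density $\ll p^{-\ind\sigma_p-1}\le p^{-2k_p}$. That bound holds whether or not the marked linear factor is the repeated one, so your independence worry does not arise. The upshot is a condition modulo $C$ (not $D'^2$) of density $O(1)^{\omega(C)}/D'^2$.

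The genuine gap is Step 2. The radical $C$ may be as large as $H^{1+\delta}>H$, and your fibering argument cannot exploit a modulus that large.
\begin{itemize}
\item Fix a root class $r\bmod C$ and fix $(c_2,\ldots,c_n)$. The conditions $g(r)\equiv g'(r)\equiv 0$ pin $(c_0,c_1)$ to one class mod $C$, which has $\ll (1+H/C)^2$ points in the box.
\item Summing over the $C$ root classes therefore yields only the bound $O(CH^{n-1})$ once $C>H$. That is worse than the true size $H^{n+1}/C$ of even the index-one count, and it captures none of the further saving needed to reach $D'^{-2}$.
\item Your stated error $H^{n}\cdot H^{1+\delta}/C$ is already larger than the target, since $H^{n+1}/D'^2\le H^{n+1}/C^2$.
\item ``Shifting the multiple root to $0$'' cannot be done uniformly over all $g$ and all $p\mid C$ without destroying the height bound.
\item Bounding the higher-index and pointed conditions by ``radical conditions times local density ratios'' presupposes the equidistribution modulo $C$ that you are trying to prove.
\end{itemize}

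The missing tool is the Fourier-analytic count of Bhargava used in \cite[\textsection 5.3]{ABO_RecipPolys}. It works by Poisson summation against a smooth weight, with bounds on the Fourier transforms of the mod-$p$ splitting-type weights, and it tolerates moduli up to $H^{1+\delta}$. The only new work in the twisted setting is to define the pointed weights $w_{p,\sigma,\pm 2}$. These are obtained by pulling back the weights $w'_{p,\sigma}$ of \cite{ABO_RecipPolys} through a linear change of variables on binary forms that moves the marked root to $\pm 2r_p$. The Fourier bounds then transfer unchanged.
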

\begin{proof}
First we divide out from $D$ all primes $p \leq n$ and all primes $p \mid 2b$. If there is at least one $K_g$ with $D \mid \Disc K_g$, this change only affects $D$ by a bounded factor, since $v_p(\Disc K_g)$ is uniformly bounded.

For each prime $p \mid C$, excluding the degenerate case that $g \equiv 0$ mod $p$ (which is dealt with in the same way as in \cite{ABO_RecipPolys}), let $\sigma_p = (f_1^{e_1} \cdots f_r^{e_r})$ be the splitting type of the homogenization $\tilde g = y^n g(x/y)$. By \cite[Lemma 5.3, first inequality]{ABO_RecipPolys},
\[
  \ind(\tilde g \bmod p) \geq v_p(\Disc K_g) \geq v_p(D).
\]
Let
\[
  k_p=v_p(D')=\left\lceil\frac{v_p(D)}2\right\rceil.
\]
If $v_p(\Disc K_g)\geq 2k_p$, then
\[
  \ind(\tilde g \bmod p)\geq 2k_p,
\]
which gives the unpointed case $(0)$ below.

Otherwise, necessarily
\[
  v_p(\Disc K_g)=v_p(D)=2k_p-1,
\]
and therefore
\[
  v_p(\disc g)=v_p(\Disc K_g)
  +2v_p([\OO_{K_g}:\ZZ[\beta]])\equiv1\pmod2.
\]
Since $G_f\subseteq G_2$, Lemma \ref{lem:G123_conds}\ref{it:G2} implies that
\[
  g\bigl(2\sqrt b\bigr)g\bigl(-2\sqrt b\bigr)\disc g
\]
is a square, and hence
\[
  v_p\Bigl(g\bigl(2\sqrt b\bigr)g\bigl(-2\sqrt b\bigr)\Bigr)
  \equiv1\pmod2.
\]
In particular, $b$ must be a square modulo $p$: if not, then $p$ is inert in $\QQ(\sqrt b)$ (recall that $p\nmid 2b$), and the norm
\[
  g\bigl(2\sqrt b\bigr)g\bigl(-2\sqrt b\bigr)
  =N_{\QQ(\sqrt b)/\QQ}\bigl(g(2\sqrt b)\bigr)
\]
has even $p$-adic valuation, a contradiction. Thus choose $r_p\in\FF_p$ with $r_p^2=b$. Since the product
\[
  g(2r_p)g(-2r_p)
\]
vanishes modulo $p$, at least one of $u-2r_p$ and $u+2r_p$ divides $g(u)$ modulo $p$. We are therefore in one of the following three cases:
\begin{enumerate}[$($a$)$]
  \item[$(0)$] $\ind(\sigma_p)\geq2k_p$;
  \item[$(2)$] $\ind(\sigma_p)\geq2k_p-1$ and $f_1=1$ with $P_1=x-2r_py$;
  \item[$(-2)$] $\ind(\sigma_p)\geq2k_p-1$ and $f_1=1$ with $P_1=x+2r_py$.
\end{enumerate}
Accordingly, an annotated splitting type is again a pair $(\sigma,j)$ with
$j\in\{0,2,-2\}$. In the pointed cases $j = \pm 2$, the weighting
\[
  w_{p,\sigma,j}(h)=w'_{p,\sigma}\bigl(x^n h(y/x-jr_p)\bigr)
\]
picks out forms with a root at $jr_p$. The remainder of the proof proceeds
exactly as in \cite[\textsection 5.3]{ABO_RecipPolys}.
\end{proof}
As explained in the discussion following the statement of Lemma 5.7 in \cite{ABO_RecipPolys}, this lemma immediately establishes Theorem \ref{thm:G2} in Case I (following Bhargava's numbering of the cases), where $D \coloneqq \size{\Disc K_g} \geq H^{2 + 2\delta}$ and $C \coloneqq \prod_{p \mid D} p \leq H^{1 + \delta}$. The proof of Case II is unchanged from \cite[\textsection 5.4]{ABO_RecipPolys}, which is in turn unchanged from \cite[\textsection 5]{Bhargava_vdW}, since the (twisted) reciprocality of $f$ is not needed. Finally, for Case III, we construct a suitable analogue of the double discriminant by setting
\[
  h(c_0,\ldots, c_n) = g\bigl(2\sqrt{b}\bigr) g\bigl(-2\sqrt{b}\bigr) \disc g
\]
and
\begin{align*}
  R(c_0,\ldots, c_n) &= \Res_{c_0} \left(h, \frac{\partial}{\partial c_0} h\right) \\
  &= (-1)^{n(n-1)/2} c_n \disc_{c_0} h \\
  &= (-1)^{n(n-1)/2} c_n \disc_{c_0} \disc_u g \cdot \Bigl(g\bigl(2\sqrt{b}\bigr) - g\bigl(-2\sqrt{b}\bigr)\Bigr)^2 \cross \\
  &\quad {}\cross \Bigl(\disc_u \bigl(g - g\bigl(2\sqrt{b}\bigr)\bigr)\Bigr)^2 \Bigl(\disc_u \bigl(g - g\bigl(-2\sqrt{b}\bigr)\bigr)\Bigr)^2 
\end{align*}
(compare \cite[(13)]{ABO_RecipPolys}). Just as in \cite{ABO_RecipPolys}, we obtain that if $p \mid C$, then $p^2 \mid h$ for mod $p$ reasons, which implies $p \mid R$, and the rest of the proof proceeds as in \cite{ABO_RecipPolys}.

\subsection{Counting \texorpdfstring{$G_3$}{G3}-polynomials}\label{sec:G3}

Finally, we establish the following analogue of Theorem 6.1 of \cite{ABO_RecipPolys}.

\begin{theorem} \label{thm:G3}
  For fixed $b\neq0$ and $n \geq 3$ odd,
  \begin{align}
    \E_{n,b}(G_3; H) &\ll_{n,b} \begin{cases}
      H^2 \log^2 H & n = 3 \\
      H^{\frac{n+1}{2}} & n \geq 5
    \end{cases}  \label{eq:G3_nonmonic} \\
    \E_{n,b}^\monic(G_3; H) &\ll_{n,b} \begin{cases}
      H^2 & n = 3 \\
      H^2 \log H \log \log H & n = 5 \\
      H^{\frac{n-1}{2}} \log H & n \geq 7.
    \end{cases}
    \label{eq:G3_monic}
  \end{align}
\end{theorem}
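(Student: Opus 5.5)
The plan is to follow the proof of \cite[Theorem 6.1]{ABO_RecipPolys}, replacing reciprocity by twisted reciprocity throughout. The first step is a structural description of $G_3$-polynomials. Assume $G_g=S_n$ and $G_f\subseteq G_3=\<\1\>\cross S_n$ with $n$ odd. Then every element of $G_f$ either fixes all the pairs $\{\alpha_i,b/\alpha_i\}$ pointwise or swaps all of them at once. Hence the field $L=\QQ(\sqrt d)$ cut out by $G_f\to\<\1\>$ is quadratic, since $G_f\not\subseteq G_1$ when $n$ is odd, generically. The set $\{\alpha_1,\ldots,\alpha_n\}$ is stable under $\operatorname{Gal}(\overline\QQ/L)$, and the nontrivial automorphism of $L$ sends it to $\{b/\alpha_1,\ldots,b/\alpha_n\}$. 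So over $L$ we get
\[
  f = a\, F\bar F,
\]
with $F\in L[x]$ of degree $n$ and $\bar F(x)$ proportional to the twisted reversal $x^nF(b/x)$. Equivalently, $\beta^2-4b\in d\,(K_g^\times)^2$. Clearing denominators, I would take $F\in\OO_L[x]$ with content ideal $\ctidl(F)$ of bounded norm times an ideal class representative, exactly as in the reciprocal case. The twisted-reversal condition $\bar F(x)=\epsilon\,b^{-?}x^nF(b/x)$, with $\epsilon$ a unit or a small scalar, imposes $n+1$ real linear conditions on the $2(n+1)$ real coordinates of the coefficients of $F$. The admissible $F$ therefore form a lattice of rank $n+1$ in an $(n+1)$-dimensional real space.

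Next comes the height comparison. By a Mahler-measure or Gelfond-type inequality (on which $\Ht_b$, being equivalent to a norm on the coefficients of $g$, has no effect), $\Ht_b(f)\leq H$ forces the coefficients of $F$, in all real embeddings, to be $\ll_{n,b} H^{1/2}$, after normalizing by the scalar $a$ and the content. The lattice of admissible $F$ has covolume $\asymp_{n,b}|d|^{(n+1)/4}$, because each conjugate pair of coefficients contributes a factor $\sqrt{|d|}$ split across the $n+1$ free directions. Counting lattice points in a box of side $H^{1/2}$ by Davenport's lemma then gives
\[
  \ll_{n,b} \frac{H^{(n+1)/2}}{|d|^{(n+1)/4}} + (\text{lower-dimensional projections}).
\]
The projection terms must be controlled by bounding the successive minima of this lattice: the shortest nonzero admissible $F$ has size $\gg 1$, and in the directions where $\sqrt d$ appears, size $\gg |d|^{1/2}$. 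I also have to sum over the scalar $a$ and over the bounded set of content classes. Since $a\cdot\Ht(F)^2\ll H$, this is the step that produces the extra $\log$ factors when $n=3$.

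Finally I sum over squarefree $d$, noting that $|d|\ll H^{O(1)}$ because $d$ divides the discriminant of $f$. For $n\geq5$ the exponent $(n+1)/4>1$, so the main term converges and gives $H^{(n+1)/2}$. I must check that the projection terms, which carry smaller powers of $|d|$ but also smaller powers of $H$, sum to no more than this. For $n=3$ the sum over $d$ is harmonic, which gives one $\log H$, and the sum over $a$ gives the second, hence $H^2\log^2H$. In the monic case, $F$ can be taken with $\bar F$ equal to the monic twisted reversal, so $a=1$ and there is one fewer free real direction. The main count becomes $H^{(n-1)/2}$ times $\sum_d|d|^{-(n-1)/4}$, with appropriate truncation. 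This gives $H^{(n-1)/2}\log H$ for $n\geq7$, where the boundary projection terms contribute the $\log$. For $n=5$ the sum is critical and the divisor-type sum yields $H^2\log H\log\log H$; for $n=3$ it gives $H^2$.

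I expect the main obstacle to be the uniform-in-$d$ lattice point count, that is, controlling the successive minima of the twisted-reversal lattice over $\OO_L$ for all $|d|$ up to a power of $H$, including the dependence on $b$ through $\gcd(d,b)$ and the ramification of $L$ at primes dividing $2b$. My first attempt would be to diagonalize the conditions prime-by-prime for $p\mid 2bd$, reducing to the $b=1$ lattices of \cite[\textsection 6]{ABO_RecipPolys} up to a bounded-index sublattice depending only on $b$. That would let me reuse those bounds with constants depending on $n$ and $b$.
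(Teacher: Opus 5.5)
You have the same architecture as the paper: write $f$ over $L=\QQ(\sqrt d)$ as a constant times $F(x)\cdot x^nF(b/x)$ with $x^nF(b/x)=c'\bar F(x)$, so that only the first $(n+1)/2$ coefficients of $F$ are free, each contributing $O(H/\sqrt{|d|})$. However, the step you flag as the main obstacle cannot be settled by lower bounds on successive minima. For every $d$, the admissible lattice contains the rank-$(n+1)/2$ sublattice of scalar multiples of rational polynomials; these give $f$ proportional to a square. For imaginary $L$ with $\aa=\OO_L$, the box contains $\gg H^{(n+1)/4}$ such points regardless of $|d|$. No bound on minima removes them, and summed over $|d|$ up to a power of $H$ (all your discriminant truncation gives) they swamp $H^{(n+1)/2}$.

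The missing idea is the paper's non-collinearity observation. Since $F$ is not a scalar multiple of a rational polynomial, the disk or Minkowski rectangle $\D$ of area $O(N(\aa)H)$ containing all coefficients already contains three non-collinear points of $\aa=\ctidl(F)$: $0$ and two coefficients. Hence $\#(\D\cap\aa)\ll\operatorname{area}(\D)/\operatorname{covol}(\aa)=O(H/\sqrt{|d|})$ with no error term. This also forces $|d|\ll H^2$, a truncation you genuinely need (e.g.\ for monic $n=3$) and which the discriminant does not supply. After this, no comparison with the $b=1$ lattices is needed: $b$ enters only through $\ctidl(x^nF(b/x))=\aa\bb$ with $\bb\mid b^n$, and through $c'$, each with $O(1)$ choices after unit normalization. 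You also dismiss factorizations over $\QQ$ as non-generic. They belong to $\E_{n,b}(G_3;H)$ and need their own (easy) $O(H^{(n+1)/2})$ count.

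Your accounting of the logarithms is also wrong. The sum over the scalar $a$ converges because $(n+1)/2>1$ (the paper simply reduces to primitive $f$), so it cannot supply the second $\log$ for $n=3$. Nor are the content classes bounded. From $(1)=(c)\aa^2\bb$, the class of $\aa$ is fixed only up to $\Cl(L)[2]$, of size $\asymp 2^{\omega(|d|)}$ by genus theory. That factor is the true source: $H^{(n+1)/2}\sum_{|d|\ll H^2}2^{\omega(|d|)}|d|^{-(n+1)/4}$ is $\ll H^2\log^2H$ for $n=3$ and $\ll H^{(n+1)/2}$ for $n\geq5$.

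In the monic case $\aa=(1)$, but for real $L$ a unit parameter survives. Taking $F$ monic, the constant term $F(0)=c'$ has norm $b^n$ and is determined only up to a unit. Only $O(\log H/\log|d|)$ such units meet the height bound, since the fundamental unit is $\gg\sqrt{|d|}$. Summing $(H/\sqrt{|d|})^{(n-1)/2}\cdot\log H/\log|d|$ over $|d|\ll H^2$ gives exactly $H^2$, $H^2\log H\log\log H$ (from $\sum 1/(d\log d)$), and $H^{(n-1)/2}\log H$. Neither ``boundary projection terms'' nor a ``divisor-type sum'' produces these. Without the class-group and unit multiplicities, your count would yield $H^2\log H$ (nonmonic, $n=3$), $H^2\log H$ (monic, $n=5$) and $H^{(n-1)/2}$ (monic, $n\geq7$). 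These are stronger than the theorem, and your argument does not justify them because it omits genuine multiplicities.
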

\begin{proof}
First, it suffices to count $f$ that are primitive (i.e.\ of content $1$), since if we can establish $\E_{n,b}^{\prim}(G_3; H) \ll H^{\frac{n+1}{2}}$ for $n \geq 5$, then by summing over contents,
\[
  \E_{n,b}(G_3; H) = \sum_{k \geq 1} \E_{n,b}^\prim\Big(G_3; \frac{H}{k}\Big) \ll H^{\frac{n+1}{2}} \sum_{k \geq 1} \frac{1}{k^{\frac{n+1}{2}}} \ll H^{\frac{n+1}{2}},
\]
and likewise for all the other cases.

The Galois condition $G_f \subseteq G_3$ occurs when there is a factorization
\begin{equation} \label{eq:f_fax}
  f(x) = c \cdot h(x) \cdot x^n h\left(\frac{b}{x}\right)
\end{equation}
defined either over $\QQ$ or over a quadratic field $K_2 = \QQ\bigl(\sqrt{k}\bigr)$. In the former case, we may scale $h$ such that $\ct h = 1$. Then $|c^{-1}| = \ct(x^n h(b/x))$ is a divisor of $b^n$ and in particular is bounded. So $\Ht h = \Htp h \ll \sqrt{H}$, and we get at most $H^{(n+1)/2}$ polynomials $f$, as in the first part of \cite[\textsection 6.2]{ABO_RecipPolys}.

We are left with the case that \eqref{eq:f_fax} holds with two degree $n$ factors defined over $K_2 = \QQ\bigl(\sqrt{k}\bigr)$ for some squarefree integer $k$. Let $\aa = \ctidl(h)$ be the content ideal; by scaling, we may assume that $\aa$ is integral. We have
\[
  \ctidl\biggl( x^n h\Bigl(\frac{b}{x}\Bigr) \biggr) = \aa \bb,
\]
where $\bb$ is a divisor of $b^n$. Taking content ideals of both sides of \eqref{eq:f_fax}, we get
\[
  (1) = \ctidl(f) = (c)\aa^2 \bb.
\]
There are $O(1)$ possibilities for $\bb$, and for each $\bb$, the class of $\aa$ is unique up to multiplying by one of the $\size{\Cl(K_2)[2]} \asymp 2^{\omega(|k|)}$ ideal classes of order dividing $2$. Let us fix $\aa$ and $\bb$. 

Since $f \in \ZZ[x]$, there must be a constant $c' \in K_2^\cross$ such that
\begin{equation} \label{eq:h_conj}
  x^n h\left(\frac{b}{x}\right) = c' \bar{h}(x).
\end{equation}
Comparing contents of \eqref{eq:h_conj}, we get
\[
  (c') = \frac{\aa \bb}{\bar \aa},
\]
so in particular $|N_{K_2/\QQ}(c')| = N(\bb)$ is a divisor of $|b|^n$. If $K_2$ is imaginary, we get $\size{c'}_\infty \asymp 1$. If $K_2$ has two real places $v, \bar v$, note that the scaling $h \mapsto \eta h$ causes $c' \mapsto \pm \eta^2 c'$. By scaling by an appropriate power of the fundamental unit $\eta$, we can assume that $1 \leq \size{c'}_v < \size{\eta}_v^2$. Then there are $O(1)$ possible values of $c'$ given $\bb$ and $\aa$.

We have
\begin{align}
  \sqrt{H} &\geq \sqrt{\Ht f} \nonumber \\
  &= \sqrt{\Htp f} \nonumber \\
  &\asymp \Htp h \nonumber \\
  &= \prod_{v} \max\bigl\{\size{\theta_0}_v, \ldots, \size{\theta_n}_v\bigr\}^{[(K_2)_v : \QQ_v] / [K_2 : \QQ]} \nonumber \\
  &= \prod_{v \nmid \infty} N(\aa\OO_v)^{[(K_2)_v:\QQ_v]/2} \prod_{v\mid \infty} \max\bigl\{\size{\theta_0}_v, \ldots, \size{\theta_n}_v\bigr\}^{[(K_2)_v : \QQ_v] / 2} \nonumber \\
  &= N(\aa)^{-1/2} \prod_{v\mid \infty} \max\bigl\{\size{\theta_0}_v, \ldots, \size{\theta_n}_v\bigr\}^{[(K_2)_v : \QQ_v] / 2}. \label{eq:ht_bd}
\end{align}

In the case that $K_2$ is complex, this constrains the $\theta_i$ to lie in a disk $\D$ of area $O\big(N(\aa) H\big)$. Since $\aa$ has covolume $\Theta\big(\sqrt{\size{k}}\,N(\aa)\big)$, the number of lattice points of $\aa$ in $\D$ is $O(H/\sqrt{\size{k}})$, using that there are at least three noncollinear points in $\D \intsec \aa$ since $h$ is not a scalar multiple of an integer polynomial. Also, by \eqref{eq:h_conj}, the first $(n+1)/2$ coefficients $\theta_0,\ldots, \theta_{(n+1)/2}$ determine the others, so we get $O(H^{(n+1)/2} \size{k}^{-(n+1)/4})$ polynomials $h$ for given $\bb$ and $\aa$.

In the case that $K_2$ is real, the two factors in \eqref{eq:ht_bd} are linked by \eqref{eq:h_conj}, which tells us that
\[
  \max_i \size{\theta_i}_v \asymp \size{c'}_v \max_i \size{\theta_i}_{\bar v} .
\]
Hence
\[
  \size{\theta_i}_{v} \ll \sqrt{\size{c'}_v N(\aa) H} \textand \size{\theta_i}_{\bar v} \ll \sqrt{\frac{N(\aa) H}{\size{c'}_v}}.
\]
So the $\theta_i$ (in the Minkowski embedding) are constrained to lie in a rectangle $\D$ of area $O\big(N(\aa) H\big)$. As in the preceding case, this yields $O(H^{(n+1)/2} k^{-(n+1)/4})$ polynomials $h$ for given $\bb$ and $\aa$. In particular, we must have $k \ll H^2$ to get any $h$ at all. Lastly, we sum over $\bb$, $\aa$ and $k$ to get
\begin{align}
  \E_{n,b}(G_3; H) &\ll H^{(n+1)/2}\sum_{\substack{0<|k| \ll H^2\\\text{squarefree}}} \frac{2^{\omega(|k|)}}{|k|^{(n+1)/4}} \nonumber \\
  &\ll H^{(n+1)/2} \sum_{\substack{d,e \ll H^2}} \frac{1}{d^{(n+1)/4} e^{(n+1)/4}} \nonumber \\
  &\ll \begin{cases}
    H^{(n+1)/2} \log^2 H & n = 3 \\
    H^{(n+1)/2} & n \geq 5.
  \end{cases}
\end{align}
The monic case is similar and simpler, as we can take $\aa = (1)$, so $(c') = \bb$ and $\theta_n$ is a unit. There are $O(\log H/\log \size{k})$ units within the appropriate height bounds, yielding
\begin{equation*}
  \E_{n,b}^{\monic}(G_3; H) \ll \sum_{2 \leq |k| \ll H^2} \left(\frac{H}{\sqrt{|k|}}\right)^{\frac{n-1}{2}} \cdot \frac{\log H}{\log |k|}
\end{equation*}
which gives the claimed numerics as in \cite{ABO_RecipPolys}.
\end{proof}

\begin{proof}[Proof of Theorem \ref{thm:main}]
By Proposition \ref{prop:G_i}, after removing the polynomials with $G_g\neq S_n$, every exceptional polynomial is counted by $G_1$, $G_2$, or, when $n$ is odd, $G_3$.  Theorem \ref{thm:G1} gives the asserted main terms.  Theorem \ref{thm:G2} contributes $O_{n,b}(H^n)$ in the nonmonic case and $O_{n,b}(H^{n-1})$ in the monic case.  For $n\geq4$, the bounds in Theorem \ref{thm:G3} are smaller still.  The intersections among these subgroups are covered by the same error terms.  This proves both asymptotic formulas.

Finally, \eqref{eq:b_height_free} and the usual lattice-point estimate for an ellipsoid give
\[
  \#\{f:\deg f=2n,\ f\text{ is $b$-reciprocal},\ \Ht_b(f)\leq H\}
  =\frac{V_{n+1}}{2^{n/2}|b|^{3n(n+1)/4}}H^{n+1}
   +O_{n,b}(H^n),
\]
which yields the probability statement following Theorem \ref{thm:main}.
\end{proof}

\subsection{Uniformity in the twisting parameter}

We finish by recording what the preceding proofs give when $b$ is allowed to vary.  This range is not expected to be optimal.  Put $\iota=\sgn(b)$ and
\[
  \widetilde H=H/|b|^{n/2}.
\]
Equation \eqref{eq:height_Q} implies
\[
  |c_j|\ll_n \widetilde H|b|^{-j/2}\leq \widetilde H.
\]
We must have $\widetilde H \gg |b|^{n/2}$ to get any degree $2n$ polynomials at all. The elementary error terms in the $G_1$ calculation are absorbed by
\[
  O_{n,\epsilon}(|b|^{2n-2+\epsilon}\widetilde H^n).
\]
The same bound applies to the $G_2$ calculation.  Away from the primes dividing $2b$, its proof is uniform, while the discriminant of a degree-$n$ local field has valuation at most $n-1$ at primes exceeding $n$.  Removing the primes dividing $2b$ therefore loses at most $|b|^{2(n-1)}$ in the square-divisibility estimate; the finitely many primes at most $n$ contribute only a constant depending on $n$.  The occurrences of $b$ in the double discriminant contribute $|b|^\epsilon$ through the divisor bound.

Next consider the $G_3$ contribution, which occurs only when $n$ is odd.  The rational-factor part of the proof of Theorem \ref{thm:G3} gives an explicit exponent for the dependence on $|b|$.  As there, first take $f$ primitive and write
\[
  f(x)=c\,h(x)q(x),\qquad
  h(x)=\sum_{i=0}^n\theta_i x^i,\qquad
  q(x)=x^nh(b/x)=\sum_{i=0}^n\theta_i b^i x^{n-i},
\]
with $h\in\ZZ[x]$ primitive.  Since $\ctidl(q)\mid |b|^n$, we have
\[
  \Htp(q)=\frac{\max_i|\theta_i||b|^i}{\ctidl(q)}
  \geq |b|^{-n}\Htp(h).
\]
Multiplicativity of projective height up to constants depending only on $n$ therefore gives
\[
  H\geq\Ht(f)=\Htp(f)
  \gg_n\Htp(h)\Htp(q)
  \geq |b|^{-n}\Htp(h)^2,
\]
and hence
\[
  \Htp(h)\ll_n |b|^{n/2}H^{1/2}.
\]
Counting the $n+1$ integral coefficients of $h$, and then summing over the content of $f$ as in the proof of Theorem \ref{thm:G3}, bounds this contribution by
\[
  O_n\bigl(|b|^{n(n+1)/2}H^{(n+1)/2}\bigr).
\]
Since $H=|b|^{n/2}\widetilde H$, the factor in this bound is
\[
  |b|^{n(n+1)/2}H^{(n+1)/2}
  =|b|^{n(n+1)/2+n(n+1)/4}\widetilde H^{(n+1)/2}
  =|b|^{3n(n+1)/4}\widetilde H^{(n+1)/2}.
\]
In the genuinely quadratic-factor case, \eqref{eq:h_conj} gives $\Htp(q)=\Htp(\bar h)=\Htp(h)$.  The remaining dependence on $b$ in that argument comes from the content ideals dividing $(b^n)$ and the associated divisor factors, and is absorbed by the same power of $|b|$ in the estimates below.  Thus the catch-all exponent may be taken to be $3n(n+1)/4$.  We use this exponent in the monic estimate as well, without attempting to optimize it there.

Finally, the contribution from $G_g\neq S_n$ is $O_n(\widetilde H^n)$ by \cite{Bhargava_vdW} and \cite{ABO_RecipPolys}.  Combining these contributions, we obtain, for every $\epsilon>0$,
\begin{multline}\label{eq:uniform_nonmonic}
  \E_{n,b}(H)
  =\kappa_{n,\iota}\rho(|b|)|b|^{-n(3n+1)/4}H^n\log H\\
  {}+O_{n,\epsilon}\left(
    |b|^{2n-2-n^2/2+\epsilon}H^n+
    |b|^{n(n+1)/2}H^{(n+1)/2}\log^2\left(2H/|b|^{n/2}\right)
  \right),
\end{multline}
and
\begin{multline}\label{eq:uniform_monic}
  \E_{n,b}^{\monic}(H)
  =\kappa_{n-1,\iota}\rho(|b|)
    |b|^{-3n(n-1)/4}H^{n-1}\log H\\
  {}+O_{n,\epsilon}\left(
    |b|^{2n-2-n(n-1)/2+\epsilon}H^{n-1}+
    |b|^{n(n+2)/2}H^{(n-1)/2}\log^2\left(2H/|b|^{n/2}\right)
  \right).
\end{multline}
Here the second error term in each estimate may be omitted when $n$ is even, since $G_3$ does not then occur.

Since
\[
  1\leq\rho(|b|)\leq\prod_{p^e\parallel |b|}(e+1)=\tau(|b|)\ll_\epsilon |b|^\epsilon,
\]
the first error term in \eqref{eq:uniform_nonmonic} is smaller than the main term whenever
\begin{equation}\label{eq:uniform_range_nonmonic}
  |b|^{n(n+1)/4+2n-2+\epsilon}=o(\log H).
\end{equation}
The corresponding sufficient condition in the monic case is
\begin{equation}\label{eq:uniform_range_monic}
  |b|^{(n-1)(n/4+2)+\epsilon}=o(\log H).
\end{equation}
These ranges are not expected to be optimal, but a range exponential in some power of $|b|$ is necessary: on the hyperplane $Y = 0$, there are $\asymp |b|^{-n(3n+1)/4+1/2}H^n
\left(1+O\bigl(\frac{|b|^n}{H}\bigr)\right)$ polynomials, all of which automatically satisfy the $G_1$ condition, and these overshadow the main term unless
\[
  \log H\gg\frac{\sqrt{|b|}}{\rho(|b|)} \gg_\epsilon |b|^{1/2 - \epsilon}.
\]

\appendix
\section{Weighted primitive lattice points}\label{app:primitive_lattice}

\begin{proof}[Proof of Lemma~\ref{lem:weighted_primitive_lattice}]
It is enough to prove the assertion with $\Phi$ the characteristic
function of a single class $(u,v)\in P_q$, and then sum over the finitely
many classes.  Thus consider
\[
  S_{u,v}(T)=
  \sum_{\substack{(s,t)\in\ZZ^2,\ (s,t)=1\\
                   (s,t)\equiv(u,v)\pmod q}}
  W(s/T,t/T).
\]
By M\"obius inversion,
\[
  \mathbf 1_{(s,t)=1}=\sum_{d\mid s,\ d\mid t}\mu(d).
\]
Since $(u,v)\in P_q$, no prime dividing $q$ can divide both $s$ and $t$;
hence only $d$ with $(d,q)=1$ occur.  For such a $d$, the conditions
$d\mid s,t$ and $(s,t)\equiv(u,v)\pmod q$ specify, by the Chinese
remainder theorem, a single residue class modulo $qd$.  Standard Riemann
sum estimation on this translate of $(qd\ZZ)^2$ gives, uniformly in the
residue class,
\begin{equation}\label{eq:residue_riemann_sum}
  \sum_{(s,t)\equiv(\alpha,\gamma)\pmod{qd}}W(s/T,t/T)
  =\frac{T^2}{q^2d^2}\int_{\RR^2}W
   +O_{q,W}\left(\frac{T}{d}+1\right).
\end{equation}
The stated piecewise-$C^1$ hypotheses are more than enough for this
estimate: partition the support of $W$ into lattice squares of side
$qd/T$ and compare the value at the lattice point with the integral over
the square; squares meeting one of the finitely many boundary curves
contribute the same order of error.

Because $W$ is compactly supported, a nonzero pair contributing to the
sum has $d\ll_W T$.  Substituting \eqref{eq:residue_riemann_sum} into the
M\"obius sum therefore gives
\begin{align*}
  S_{u,v}(T)
  &=\frac{T^2}{q^2}\int_{\RR^2}W
    \sum_{\substack{d\geq1\\(d,q)=1}}\frac{\mu(d)}{d^2}
    +O_{q,W}(T\log(2T))\\
  &=\frac{T^2}{q^2}\int_{\RR^2}W
    \prod_{p\nmid q}(1-p^{-2})
    +O_{q,W}(T\log(2T)).
\end{align*}
(The omitted tail of the absolutely convergent $d^{-2}$ series contributes
$O_{q,W}(T)$.)  Finally,
\[
  |P_q|=q^2\prod_{p\mid q}(1-p^{-2}),
\]
so
\[
  \frac{1}{q^2}\prod_{p\nmid q}(1-p^{-2})
  =\frac{1}{\zeta(2)|P_q|}.
\]
Multiplying by $\Phi(u,v)$ and summing over $(u,v)\in P_q$ proves
\eqref{eq:weighted_primitive_lattice}.
\end{proof}

\bibliography{ourbib_4}
\bibliographystyle{alpha}
\end{document}